\theoremstyle{plain}
\newtheorem{theorem}{Theorem}
\newtheorem*{theorem*}{Theorem}
\newtheorem{corollary}{Corollary}
\newtheorem*{corollary*}{Corollary}
\newtheorem{lemma}{Lemma}
\newtheorem*{lemma*}{Lemma}
\newtheorem*{proposition*}{Proposition}
\newtheorem*{conjecture*}{Conjecture}
\theoremstyle{definition}
\newtheorem*{definition*}{Definition}
\theoremstyle{remark}
\newtheorem{remark}{Remark}
\newtheorem*{remark*}{Remark}
\begin{document}

\title[Rational numbers in terms of Cantor series]{Rational numbers represented by sign-variable Cantor series (Rational numbers defined in terms of certain generalized series)}
\author{Symon Serbenyuk}
\address{
  45~Shchukina St. \\
  Vinnytsia \\
  21012 \\
  Ukraine}
\email{simon6@ukr.net}

\subjclass[2010]{11K55 11J72   26A30}

\keywords{sign-varianle expansions of real numbers, rational number, q-ary numeral system.}

\begin{abstract} The present article is devoted to representations of rational numbers in terms sign-variable Cantor expansions. The main attention is given to   one of the  discussions  given by J.~Galambos in~\cite{Galambos1976}.
\end{abstract}

\maketitle

\maketitle



\section{Introduction}
In 1869, in the paper \cite{Cantor1}, Georg Cantor   introduced   series of the form
\begin{equation}
\label{eq:  series 1}
\frac{\varepsilon_1}{q_1}+\frac{\varepsilon_2}{q_1q_2}+\dots +\frac{\varepsilon_k}{q_1q_2\dots q_k}+\dots .
\end{equation}
Here $Q\equiv (q_k)$ is a fixed sequence of positive integers, $q_k>1$,  and $(\Theta_k)$ is a sequence of the sets $\Theta_k\equiv\{0,1,\dots ,q_k-1\}$,  as well as $\varepsilon_k\in\Theta_k$.

Note that the last-mentioned  expansion under the condition $q_k=const=q$ for all positive integers $k$, where $1<q\in\mathbb N$ ($\mathbb N$  is the set of all positive integers), is the q-ary expansions of real numbers from~$[0,1]$, i.e., 
$$
\frac{\varepsilon_1}{q}+\frac{\varepsilon_2}{q^2}+\dots+\frac{\varepsilon_k}{q^k}+\dots,
$$
where $\varepsilon_k\in\{0,1, \dots , q-1\}$. In this case, a number is a rational number if and only if a sequence $(\varepsilon_k)$ is periodic.

Series of form \eqref{eq:  series 1} are called \emph{Cantor series}. By $\Delta^Q _{\varepsilon_1\varepsilon_2...\varepsilon_k...}$  denote any number $x\in [0,1]$  having expansion \eqref{eq:  series 1}. This notation is called \emph{the representation of  $x\in [0,1]$ by Cantor series \eqref{eq:  series 1}}. This encoding of real numbers is an example of a polybasic numeral system with zero redundancy and has a finite alphabet when $(q_k)$ is bounded.

Cantor series expansions have been intensively studied from different points of view during the last century (for example, see a brief description in \cite{Serbenyuk17}). For example, the following researchers investigated problems related with Cantor series: P. Erd\"os, J. Galambos, G. Iommi, P. Kirschenhofer,  T. Komatsu, V. Laohakosol, B. Li, M. Pa\v{s}t\'eka, S. Prugsapitak, J. Rattanamoong,  A. R\'enyi, B. Skorulski, R. F. Tichy, P. Tur\'an, Yi Wang, M. S. Waterman, H. Wegmann, Liu Wen, Zhixiong Wen, Lifeng Xi, and other mathematicians. However, many problems related to these series are not solved completely. One can note that  criteria of representation of rational numbers, modeling functions with a complicated local structure are still open problems.

The problem of the rationality/irrationality of numbers defined in terms of generalizations of the q-ary numeral system is difficult. A version of this problem for expansions of form \eqref{eq:  series 1} was introduced in the paper \cite{Cantor1} in 1869 and has been studied by a number of researchers. For example, G. Cantor, P. A. Diananda, A. Oppenheim, P. Erd\"os, J. Han\v{c}l, E.G. Straus, P. Rucki, P. Kuhapatanakul, V. Laohakosol, and other scientists studied this problem (see references in \cite{Serbenyuk17, Serbenyuk20}). However, known results include different conditions satisfied by $(\varepsilon_k)$  and/or $(q_k)$.  For example, in the paper \cite{Cantor1}, necessary and sufficient conditions for a rational number to be representable by series \eqref{eq:  series 1} are formulated for the case when $(q_k)$  is a periodic sequence. A little is known about necessary and sufficient conditions of the rationality (irrationality) for the case of an arbitrary sequence $(q_k)$. One can note  only some  resuts from the papers \cite{{Diananda_Oppenheim1955}, {Hancl_Tijdeman2004}, Ser2017, S13, Serbenyuk2017,  Serbenyuk20, Tijdeman_Pingzhi2002}. 

In~\cite{Galambos1976}, the problem on representations of rational numbers by Cantor series \eqref{eq:  series 1} is called  \emph{the fourth open problem}. In the last-mentioned monograph,   Prof.~J\'anos~Galambos noted the following:

``Problem Four. Give a criterion of rationality for numbers given by a Cantor series. What one should seek here is a directly applicable criterion. A general sufficient condition for rationality would also be of interest, in which the quoted theorems of Diananda and Oppenheim (including the abstract criterion by condensations) can be guides or useful tools.

If in a Cantor series, negative and positive terms are permitted, somewhat less is known about rationality or irrationality of the resulting sum. G. Lord (personal communication) tells me that the condensation method can be extended to this case as well, but still, the results are less complete than in the case of ordinary Cantor series."(\cite[p. 134]{Galambos1976}).
  
The present paper is devoted to representations of rational numbers by positive and sign-variable Cantor series. For postive Cantor series, author's results are recalled. For sign-variable Cantor series, corresponding results are given. However, really,  the results are less complete than in the case of positive Cantor series. 

Let us consider sign-variable Cantor series expansions. Let $\mathbb N_B$ be a fixed subset of positive integers, 
$$
a_n=\begin{cases}
-1&\text{if $n\in \mathbb N_B$}\\
1&\text{if  $n\notin \mathbb N_B$,}
\end{cases}
$$
and $Q\equiv (q_n)$ be a fixed sequence of positive integers such that $q_n>1$ for all $n\in\mathbb N$.   Then we get the following representation of real numbers
\begin{equation}
\label{eq: sign-variable series}
x=\Delta^{(\pm Q, \mathbb N_B)} _{\varepsilon_1\varepsilon_2...\varepsilon_n...}\equiv \sum^{\infty} _{n=1}{\frac{a_n\varepsilon_n}{q_1q_2\dots q_n}}, 
\end{equation}
where $\varepsilon_k\in\{0,1,\dots , q_n-1\}$. 

The last representation is called \emph{ the representation of a number $x$ by a sign-variable Cantor series} or \emph{the quasi-nega-Q-representation}.

One can note that certain numbers from $[0,1]$ have two different representations by Cantor series \eqref{eq:  series 1}, i.e., 
$$
\Delta^Q _{\varepsilon_1\varepsilon_2\ldots\varepsilon_{m-1}\varepsilon_m000\ldots}=\Delta^Q _{\varepsilon_1\varepsilon_2\ldots\varepsilon_{m-1}[\varepsilon_m-1][q_{m+1}-1][q_{m+2}-1]\ldots}=\sum^{m} _{i=1}{\frac{\varepsilon_i}{q_1q_2\dots q_i}}.
$$
Such numbers are called \emph{$Q$-rational}. The other numbers in $[0,1]$ are called \emph{$Q$-irrational}.

\begin{theorem}[\cite{Ser2017, {S13}}]
\label{theorem2}
A rational number $\frac{p}{r}\in(0,1)$ has two different representations if and only if  there exists a number $n_0$ such that
 $$
q_1q_2\dots q_{n_0} \equiv 0\pmod{r}.
$$
\end{theorem}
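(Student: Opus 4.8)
The plan is to reduce the statement to the elementary observation that a number in $[0,1]$ admits two distinct Cantor-series expansions precisely when it admits a \emph{terminating} one, and then to translate ``terminating'' into the asserted divisibility condition. Throughout I write $\frac{p}{r}$ in lowest terms, so $\gcd(p,r)=1$. First I would record the characterization already visible in the excerpt: the numbers with two representations are exactly the $Q$-rational ones, and by the displayed identity
$$
\Delta^Q _{\varepsilon_1\ldots\varepsilon_{m-1}\varepsilon_m000\ldots}=\sum^{m} _{i=1}{\frac{\varepsilon_i}{q_1q_2\cdots q_i}},
$$
a number is $Q$-rational if and only if it equals a finite sum $\sum_{i=1}^{m}\frac{\varepsilon_i}{q_1\cdots q_i}$ with $\varepsilon_i\in\{0,1,\ldots,q_i-1\}$ and $\varepsilon_m\neq 0$ (the position $m$ of the last nonzero digit being where the two expansions split; note $\varepsilon_m-1\geq 0$ and $q_k-1\geq 1$, so the second expansion has admissible digits). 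Clearing denominators, this is the same as saying $x=\frac{N}{q_1q_2\cdots q_m}$ for some integer $N$ with $0<N<q_1q_2\cdots q_m$.

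For the ``only if'' direction, suppose $\frac{p}{r}$ has two representations. By the reduction above it equals $\frac{N}{q_1\cdots q_m}$ for suitable $m$ and integer $N$, whence $p\,q_1q_2\cdots q_m=N r$. Since $\gcd(p,r)=1$, it follows that $r\mid q_1q_2\cdots q_m$, and one may take $n_0=m$.

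For the ``if'' direction, assume $q_1q_2\cdots q_{n_0}\equiv 0\pmod{r}$ for some $n_0$. Put $N=\frac{p}{r}\,q_1q_2\cdots q_{n_0}$; this is an integer, and $0<N<q_1q_2\cdots q_{n_0}$ because $\frac{p}{r}\in(0,1)$. It then suffices to expand $N$ in the mixed-radix system attached to $(q_k)$: I would show, by induction on $n_0$ (equivalently, by the division algorithm applied successively), that every integer $N$ with $0\leq N<q_1\cdots q_{n_0}$ can be written as $N=\sum_{i=1}^{n_0}\varepsilon_i\,q_{i+1}q_{i+2}\cdots q_{n_0}$ with each $\varepsilon_i\in\{0,1,\ldots,q_i-1\}$. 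Dividing by $q_1\cdots q_{n_0}$ gives $\frac{p}{r}=\sum_{i=1}^{n_0}\frac{\varepsilon_i}{q_1\cdots q_i}$, a terminating Cantor series; since $N>0$ not all digits vanish, so taking $m\leq n_0$ to be the index of the last nonzero digit and invoking the displayed identity produces a second, distinct expansion of $\frac{p}{r}$.

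The only step that is not purely formal is the mixed-radix expansion of $N$ used in the ``if'' direction; this is the analogue of writing an integer in a fixed base, and the point to verify is that the digits stay in their prescribed ranges $\{0,\ldots,q_i-1\}$, which the induction handles cleanly. Everything else is bookkeeping with the definition of $Q$-rationality together with a single coprimality argument, so I expect no genuine obstacle.
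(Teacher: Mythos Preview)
Your argument is correct. You correctly identify the two logical steps: (i) numbers with two Cantor-series representations are exactly those with a terminating expansion (the paper's $Q$-rationals), and (ii) $\tfrac{p}{r}$ with $\gcd(p,r)=1$ has a terminating expansion of length $m$ if and only if $r\mid q_1\cdots q_m$. Both directions of (ii) are handled cleanly: the coprimality argument for ``only if'' and the mixed-radix expansion of the integer $N=\tfrac{p}{r}\,q_1\cdots q_{n_0}$ for ``if'' are exactly the right tools, and your care about the last nonzero digit ensures the second expansion is genuinely distinct and has admissible digits.

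As for comparison with the paper: the present paper does \emph{not} give a proof of this theorem; it merely cites it from the author's earlier work \cite{Ser2017,S13} and remarks that Cantor already had the necessity (with a more complicated proof). So there is no in-paper argument to set yours against. Your proof is self-contained once one grants the characterization of $Q$-rationals stated in the paper's introduction; the only place where a reader might want one more line from you is the converse of step~(i), namely that \emph{any} two distinct Cantor expansions of the same number must be of the displayed terminating/eventually-maximal form. The paper asserts this without proof as well, and you explicitly take it as given, which is consistent with how the paper treats it.
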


Let us remark that the necessity of this theorem is given in~\cite{Cantor1} with the other formulation and with a more complicated proof. The interest to the last theorem is in the following corollary.

\begin{corollary}[\cite{Ser2017, {S13}}]
There exist certain sequences $(q_k)$ such that all rational numbers represented in terms of  corresponding Cantor series have  finite expansions. 
\end{corollary}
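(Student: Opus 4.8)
The plan is to deduce the corollary directly from Theorem~\ref{theorem2} by exhibiting one explicit sequence $(q_k)$ with the required property; a convenient choice is the ``factorial'' sequence $q_k=k+1$, $k\in\mathbb N$, which clearly satisfies the standing hypothesis $q_k>1$ for all $k$. First I would fix an arbitrary rational number $\frac{p}{r}\in(0,1)$ written in lowest terms, so that $r\ge 2$. For the chosen sequence one has $q_1q_2\cdots q_n=2\cdot 3\cdots (n+1)=(n+1)!$, hence for every $n_0\ge r-1$ the product $q_1q_2\cdots q_{n_0}=(n_0+1)!$ is a multiple of $r$, i.e. $q_1q_2\cdots q_{n_0}\equiv 0\pmod r$. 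By Theorem~\ref{theorem2}, the number $\frac{p}{r}$ therefore has two distinct representations by the Cantor series associated with $(q_k)$, and, as recalled in the paragraph preceding that theorem, one of these two representations is the finite sum $\sum_{i=1}^{m}\frac{\varepsilon_i}{q_1q_2\cdots q_i}$. Thus $\frac{p}{r}$ has a finite expansion.

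Next I would note that this reasoning is uniform in the denominator: for any positive integer $r$ the divisibility $r\mid (n_0+1)!$ holds as soon as $n_0\ge r-1$, so \emph{every} rational number in $(0,1)$ is $Q$-rational for $Q=(k+1)$, while the endpoints $0$ and $1$ are handled trivially. This already gives the statement of the corollary. If desired, one can record the more general principle behind the example, namely that the conclusion holds for any sequence $(q_k)$ whose partial products run through multiples of all positive integers — for instance any $(q_k)$ with $\operatorname{lcm}(1,2,\dots ,n)\mid q_1q_2\cdots q_n$ for all $n$, or any sequence in which each prime power eventually occurs among the $q_k$.

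I do not expect a genuine obstacle here; the corollary is essentially a reformulation of Theorem~\ref{theorem2}. The only point requiring a little care is to realize that the divisibility condition of that theorem must be met \emph{simultaneously} for every possible denominator of a rational number in $(0,1)$, which is precisely what forces the choice of a sequence whose partial products $q_1\cdots q_n$ are eventually divisible by each fixed integer; the factorial sequence is simply the most economical such choice, and verifying the hypotheses and the divisibility for it is immediate.
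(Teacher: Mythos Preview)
Your argument is correct and is precisely the intended deduction: the paper states this result as an immediate corollary of Theorem~\ref{theorem2} without writing out a proof, and your choice of the factorial sequence $q_k=k+1$ together with the observation that $r\mid (n_0+1)!$ for $n_0\ge r-1$ is the standard way to realise that corollary. Nothing further is needed.
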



\begin{remark}
In terms of alternating or sign-variable Cantor series, the last theorem is the  necessary and sufficient condition
for a rational number to have a finite expansion  by an alternating or sign-variable Cantor series. 
\end{remark}

Let us recall the notion of  \emph{the shift operator $\sigma$ of expansion \eqref{eq:   series 1}} which is defined  by the rule
$$
\sigma(x)=\sigma\left(\Delta^Q _{\varepsilon_1\varepsilon_2\ldots\varepsilon_k\ldots}\right)=\sum^{\infty} _{k=2}{\frac{\varepsilon_k}{q_2q_3\dots q_k}}=q_1\Delta^{Q} _{0\varepsilon_2\ldots\varepsilon_k\ldots}.
$$

It is easy to see that 
\begin{equation*}
\begin{split}
\sigma^n(x) &=\sigma^n\left(\Delta^Q _{\varepsilon_1\varepsilon_2\ldots\varepsilon_k\ldots}\right)\\
& =\sum^{\infty} _{k=n+1}{\frac{\varepsilon_k}{q_{n+1}q_{n+2}\dots q_k}}=q_1\dots q_n\Delta^{Q} _{\underbrace{0\ldots 0}_{n}\varepsilon_{n+1}\varepsilon_{n+2}\ldots}.
\end{split}
\end{equation*}

Therefore, 
\begin{equation*}
x=\sum^{n} _{i=1}{\frac{\varepsilon_i}{q_1q_2\dots q_i}}+\frac{1}{q_1q_2\dots q_n}\sigma^n(x).
\end{equation*}

The following two equivalent theorems are general necessary and sufficient conditions for a rational number to be representable by positive Cantor series.

\begin{theorem}[\cite{Ser2017, {S13}, Serbenyuk17}]
\label{th: the main theorem}
A number $x$ represented by series \eqref{eq:   series 1} is  rational if and only if  there exist numbers $n\in\mathbb Z_0$ and $m\in\mathbb N$ such that $\sigma^n(x)=\sigma^{n+m}(x)$.
\end{theorem}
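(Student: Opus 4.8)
The plan is to exploit the self-similar decomposition $x = \sum_{i=1}^{n} \frac{\varepsilon_i}{q_1\cdots q_i} + \frac{1}{q_1\cdots q_n}\sigma^n(x)$ recorded just above the statement, which shows that $\sigma^n(x)$ is itself a number in $[0,1]$ represented by a Cantor series (with respect to the shifted base sequence $q_{n+1}, q_{n+2}, \dots$), and that $x$ is rational if and only if $\sigma^n(x)$ is rational, since the finite sum is rational and $q_1\cdots q_n \in \mathbb{Q}$. Thus the whole problem reduces to the tail.

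For the "if" direction, I would assume $\sigma^n(x) = \sigma^{n+m}(x)$ and call this common value $y$. Applying the one-step relation $\sigma^{k}(x) = \varepsilon_{k+1} + \frac{1}{q_{k+1}}\sigma^{k+1}(x)$ iteratively from $k=n$ up to $k=n+m-1$, I get $y = \sigma^n(x)$ expressed as a finite $\mathbb{Z}$-linear combination of $\varepsilon_{n+1},\dots,\varepsilon_{n+m}$ and $\sigma^{n+m}(x)=y$ itself, with coefficients that are explicit rational functions of $q_{n+1},\dots,q_{n+m}$; solving the resulting linear equation $y = (\text{rational}) + (\text{rational})\cdot y$ for $y$ gives $y \in \mathbb{Q}$, provided the coefficient of $y$ is not $1$ — which it is not, since it equals $\frac{1}{q_{n+1}\cdots q_{n+m}} < 1$. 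Hence $\sigma^n(x)$, and therefore $x$, is rational.

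For the "only if" direction — which I expect to be the main obstacle — assume $x = p/r$ is rational. The key observation is that each $\sigma^n(x)$ again has the form $\frac{p_n}{r}$ with $p_n \in \mathbb{Z}$ and $0 \le p_n < r$ (the denominator $r$ need never grow): indeed from $\sigma^n(x) = \varepsilon_{n+1} + \frac{1}{q_{n+1}}\sigma^{n+1}(x)$ one solves $\sigma^{n+1}(x) = q_{n+1}(\sigma^n(x) - \varepsilon_{n+1})$, so if $\sigma^n(x) = p_n/r$ then $\sigma^{n+1}(x) = q_{n+1}(p_n - \varepsilon_{n+1} r)/r$, still a fraction with denominator dividing $r$; and it lies in $[0,1]$ because it is the value of a Cantor series. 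Therefore every $\sigma^n(x)$ belongs to the finite set $\{0/r, 1/r, \dots, (r-1)/r\}$, so by pigeonhole there exist $n < n+m$ with $\sigma^n(x) = \sigma^{n+m}(x)$, taking $n \in \mathbb{Z}_0$ (allowing $n=0$) and $m \in \mathbb{N}$. The subtlety to be careful about is whether reducing $p_n/r$ to lowest terms could produce a denominator for which the $Q$-rational ambiguity of Theorem~\ref{theorem2} interferes; but for this theorem we need only that the value lies in a finite set, and the representation $p_n/r$ with fixed denominator $r$ suffices, so the ambiguity is irrelevant here. Combining the two directions completes the proof.
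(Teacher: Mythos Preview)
Your argument is correct and matches the paper's own proof (given there for the sign-variable series, of which the positive case is the specialization $a_n\equiv 1$): for sufficiency the paper likewise solves the linear equation $y=(\text{rational})+\tfrac{1}{q_{n+1}\cdots q_{n+m}}\,y$ obtained by iterating the shift, and for necessity it observes that $\sigma^n(x)=u_n/r$ with $u_n$ ranging over a finite set of integers and applies pigeonhole. One harmless slip: the one-step recursion is $\sigma^{k+1}(x)=q_{k+1}\sigma^k(x)-\varepsilon_{k+1}$ (equivalently $\sigma^k(x)=\tfrac{\varepsilon_{k+1}}{q_{k+1}}+\tfrac{1}{q_{k+1}}\sigma^{k+1}(x)$), not $\sigma^{k+1}(x)=q_{k+1}(\sigma^k(x)-\varepsilon_{k+1})$, but this does not affect either conclusion since the denominator still stays fixed at $r$ and the coefficient of $y$ is still $\tfrac{1}{q_{n+1}\cdots q_{n+m}}<1$.
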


The last theorem is true for the case of alternating Cantor series (\cite{Serbenyuk2017}). Here $\mathbb Z_0=\mathbb N\cup \{0\}$.

\begin{theorem}[\cite{Ser2017, {S13}, Serbenyuk17}]
\label{th: the main theorem 2}
A number $x=\Delta^Q _{\varepsilon_1\varepsilon_2\ldots \varepsilon_k\ldots }$  is  rational if and only if there exist numbers $n\in\mathbb Z_0$ and $m\in\mathbb N$ such that
$$
\Delta^{Q} _{\underbrace{0\ldots 0}_{n}\varepsilon_{n+1}\varepsilon_{n+2}\ldots }=q_{n+1}\dots q_{n+m}\Delta^{Q} _{\underbrace{0\ldots 0}_{n+m}\varepsilon_{n+m+1}\varepsilon_{n+m+2}\ldots }.
$$
\end{theorem}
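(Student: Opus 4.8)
The plan is to obtain Theorem~\ref{th: the main theorem 2} from Theorem~\ref{th: the main theorem} by unwinding the definition of the shift operator, since the two conditions are merely two ways of writing the same thing. Recall the identity displayed above: for every $n\in\mathbb Z_0$,
\[
\sigma^n(x)=q_1q_2\cdots q_n\,\Delta^{Q}_{\underbrace{0\ldots 0}_{n}\varepsilon_{n+1}\varepsilon_{n+2}\ldots},
\]
and likewise $\sigma^{n+m}(x)=q_1q_2\cdots q_{n+m}\,\Delta^{Q}_{\underbrace{0\ldots 0}_{n+m}\varepsilon_{n+m+1}\varepsilon_{n+m+2}\ldots}$. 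Since $q_1q_2\cdots q_n$ is a nonzero integer, dividing the equality $\sigma^n(x)=\sigma^{n+m}(x)$ through by it produces exactly the relation displayed in the statement, and multiplying that relation by $q_1q_2\cdots q_n$ recovers $\sigma^n(x)=\sigma^{n+m}(x)$. Thus, for a fixed pair $(n,m)$, the condition of Theorem~\ref{th: the main theorem} and the condition of Theorem~\ref{th: the main theorem 2} are equivalent, and the assertion follows at once from Theorem~\ref{th: the main theorem}.

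Should a self-contained argument be preferred, I would repeat the proof scheme of Theorem~\ref{th: the main theorem} directly in the form of Theorem~\ref{th: the main theorem 2}. For sufficiency, put $y:=\sigma^n(x)=\sigma^{n+m}(x)$; splitting the series for $\sigma^n(x)$ after $m$ terms gives $y=\sum_{i=n+1}^{n+m}\frac{\varepsilon_i}{q_{n+1}\cdots q_i}+\frac{1}{q_{n+1}\cdots q_{n+m}}\,\sigma^{n+m}(x)$, hence $y\bigl(1-\tfrac{1}{q_{n+1}\cdots q_{n+m}}\bigr)=\sum_{i=n+1}^{n+m}\frac{\varepsilon_i}{q_{n+1}\cdots q_i}$; the right side is rational and the coefficient of $y$ is nonzero (because $q_{n+1}\cdots q_{n+m}>1$), so $y\in\mathbb Q$, and then $x=\sum_{i=1}^{n}\frac{\varepsilon_i}{q_1\cdots q_i}+\frac{y}{q_1q_2\cdots q_n}$ is rational. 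For necessity, write $x=\frac pr$ and note that $q_1q_2\cdots q_n\sum_{i=1}^{n}\frac{\varepsilon_i}{q_1\cdots q_i}=\sum_{i=1}^{n}\varepsilon_i q_{i+1}\cdots q_n\in\mathbb Z$, so $\sigma^n(x)=q_1q_2\cdots q_n\cdot\frac pr-(\text{an integer})$; since $0\le\Delta^{Q}_{\underbrace{0\ldots 0}_{n}\varepsilon_{n+1}\ldots}\le\sum_{k\ge n+1}\frac{q_k-1}{q_1\cdots q_k}=\frac{1}{q_1q_2\cdots q_n}$, we have $\sigma^n(x)\in[0,1]$, so $\sigma^n(x)$ is determined by the residue $q_1q_2\cdots q_n\bmod r$. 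As there are only finitely many residues modulo $r$, the pigeonhole principle yields $n\in\mathbb Z_0$ and $m\in\mathbb N$ with $q_1q_2\cdots q_n\equiv q_1q_2\cdots q_{n+m}\pmod r$, whence $\sigma^n(x)=\sigma^{n+m}(x)$, and dividing by $q_1q_2\cdots q_n$ gives the required equality.

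The step needing the most care is the necessity part of the direct route: one must verify that $\sigma^n(x)$ is genuinely recovered from $q_1q_2\cdots q_n\,p\bmod r$, which is subtle at the endpoint $\sigma^n(x)=1$ (occurring only for the exceptional tail with all digits equal to $q_k-1$) and requires keeping track of the fact that a $Q$-rational number has two representations, both of which must satisfy the conclusion. Passing instead through Theorem~\ref{th: the main theorem}, as in the first paragraph, avoids this bookkeeping completely, and that is the route I would follow.
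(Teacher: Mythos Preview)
Your proposal is correct and matches the paper's treatment: the paper does not prove this theorem independently but simply asserts that it and the shift-operator criterion (Theorem~\ref{th: the main theorem}) are equivalent, which is exactly the content of your first paragraph via the identity $\sigma^n(x)=q_1\cdots q_n\,\Delta^{Q}_{\underbrace{0\ldots 0}_{n}\varepsilon_{n+1}\varepsilon_{n+2}\ldots}$. Your optional self-contained argument also parallels the paper's proof of Theorem~\ref{th: the main theorem} (pigeonhole on the finitely many possible values of $\sigma^n(x)=u_n/v$), so nothing further is needed.
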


In this paper, it will be proven that these two theorems are true for the case of  sign-variable Cantor series, where $\sigma$ is the shift operator of a sign-variable Cantor expansion. 

Let us recall several auxiliary statements which are true for positive Cantor series but do not hold for sign-variable Cantor series.

\begin{lemma}[\cite{Ser2017, {S13}}]
Let $n_0$ be a fixed positive integer number. Then the condition $\sigma^n(x)=const$ holds for all $n\ge n_0$ if and only if  $\frac{\varepsilon_n}{q_n-1}=const$ for all $n>n_0$.
\end{lemma}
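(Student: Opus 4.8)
The plan is to exploit the one-step recursion satisfied by the shift operator. Splitting off the first term of the series defining $\sigma^n(x)$ and factoring $1/q_{n+1}$ out of the remainder gives, for every $n\in\mathbb Z_0$,
$$
\sigma^n(x)=\frac{\varepsilon_{n+1}+\sigma^{n+1}(x)}{q_{n+1}}.
$$
This single identity is the engine for both implications, and convergence of all series involved is automatic from $q_j>1$, which also guarantees that $\sigma^n(x)$ is well defined.

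For necessity, assume there is a constant $c$ with $\sigma^n(x)=c$ for all $n\ge n_0$. Applying the recursion for such $n$ yields $cq_{n+1}=\varepsilon_{n+1}+c$, hence $\varepsilon_{n+1}=c(q_{n+1}-1)$; since $q_{n+1}>1$ we may divide and obtain $\frac{\varepsilon_{n+1}}{q_{n+1}-1}=c$. As $n$ runs over all integers $\ge n_0$, the index $n+1$ runs over all integers $>n_0$, so $\frac{\varepsilon_m}{q_m-1}=c=const$ for every $m>n_0$.

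For sufficiency, assume $\frac{\varepsilon_m}{q_m-1}=c$ for all $m>n_0$, i.e. $\varepsilon_m=c(q_m-1)$ there. Fix $n\ge n_0$ and substitute into the series:
$$
\sigma^n(x)=\sum_{k=n+1}^{\infty}\frac{\varepsilon_k}{q_{n+1}q_{n+2}\cdots q_k}=c\sum_{k=n+1}^{\infty}\frac{q_k-1}{q_{n+1}q_{n+2}\cdots q_k}.
$$
Each summand telescopes, since $\frac{q_k-1}{q_{n+1}\cdots q_k}=\frac{1}{q_{n+1}\cdots q_{k-1}}-\frac{1}{q_{n+1}\cdots q_k}$, with the empty product understood to be $1$ when $k=n+1$. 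Hence the $N$-th partial sum collapses to $1-\frac{1}{q_{n+1}\cdots q_N}$, and because every $q_j>1$ the product $q_{n+1}\cdots q_N$ diverges, giving $\sigma^n(x)=c$. Since $n\ge n_0$ was arbitrary, $\sigma^n(x)=const$ for all $n\ge n_0$.

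The argument is essentially bookkeeping; the only points deserving care are the index shift between ``$n\ge n_0$'' and ``$m>n_0$'' in the necessity part, and the convention for the empty product (equivalently, starting the telescoping sum at the correct index) in the sufficiency part, so I would spell these out explicitly rather than leave them implicit.
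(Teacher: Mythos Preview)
Your proof is correct. Note, however, that the present paper does not actually supply a proof of this lemma: it is only recalled, with a citation to \cite{Ser2017,S13}, as an auxiliary fact about positive Cantor series. So there is no in-paper argument to compare against.

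On the mathematics: the one-step recursion $\sigma^n(x)=\dfrac{\varepsilon_{n+1}+\sigma^{n+1}(x)}{q_{n+1}}$ is exactly the relation $\sigma^n(x)=q_{n+1}\sigma^{n}(x)$-type identity implicit in the paper's definition of $\sigma$ (it is the case $m=1$ of the displayed formula preceding Theorem~\ref{th: the main theorem}), and both directions of your argument are clean. The index bookkeeping (that $n\ge n_0$ on the $\sigma$-side corresponds to $m>n_0$ on the digit side) and the telescoping identity $\sum_{k>n}\frac{q_k-1}{q_{n+1}\cdots q_k}=1$ are handled correctly; the latter is the same identity the paper uses elsewhere (e.g.\ in describing the endpoints $a'_{\pm Q},a''_{\pm Q}$). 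Nothing is missing.
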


\begin{lemma}[\cite{Ser2017, {S13}}]
Suppose we have $q=\min_n{q_n}$ and  fixed $\varepsilon\in\{0,1,\dots ,q-1\}$. Then the condition $\sigma^n(x)=x=\frac{\varepsilon}{q-1}$ holds if and only if the condition $\frac{q_n-1}{q-1}\varepsilon=\varepsilon_n\in\mathbb Z_0$ holds for all $n\in\mathbb N$.
\end{lemma}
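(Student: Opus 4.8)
The plan is to reduce the statement to the preceding lemma and then to evaluate the Cantor series explicitly. First I would note that, since $x=\sigma^0(x)$, the hypothesis ``$\sigma^n(x)=x$ for all $n\in\mathbb N$'' is the same as ``$\sigma^n(x)=\mathrm{const}$ for all $n\ge 0$''. Applying the previous lemma with $n_0=0$, this holds if and only if there is a constant $c$ with $\frac{\varepsilon_n}{q_n-1}=c$ for every $n\in\mathbb N$ (note $q_n\ge 2$, so the denominators are nonzero). Alternatively one can argue directly, since $\sigma(x)=x$ alone already forces $\sigma^n(x)=x$ for all $n$ and unwinds to the same digit condition.

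Next I would compute $x$ under this hypothesis. Writing $\varepsilon_n=c(q_n-1)$ and telescoping via $\frac{c(q_n-1)}{q_1\cdots q_n}=c\bigl(\frac{1}{q_1\cdots q_{n-1}}-\frac{1}{q_1\cdots q_n}\bigr)$, one obtains
\[
x=\sum_{n=1}^{\infty}\frac{c(q_n-1)}{q_1q_2\cdots q_n}=c\Bigl(1-\lim_{N\to\infty}\frac{1}{q_1q_2\cdots q_N}\Bigr)=c,
\]
the limit being $0$ because $q_n\ge 2$ for all $n$. Hence the extra requirement $x=\frac{\varepsilon}{q-1}$ becomes exactly $c=\frac{\varepsilon}{q-1}$, i.e.\ $\varepsilon_n=\frac{q_n-1}{q-1}\,\varepsilon$ for every $n\in\mathbb N$, and the two implications then follow by combining this with the first step: from ``$\varepsilon_n=\frac{q_n-1}{q-1}\varepsilon\in\mathbb Z_0$ for all $n$'' I recover $\frac{\varepsilon_n}{q_n-1}=\frac{\varepsilon}{q-1}$ (constant), hence $\sigma^n(x)=\sigma^0(x)=x$ and $x=\frac{\varepsilon}{q-1}$ by the displayed identity; conversely ``$\sigma^n(x)=x=\frac{\varepsilon}{q-1}$'' gives the same digit formula.

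The remaining step, and the only place calling for a little care, is admissibility of these digits. Since a digit of a Cantor expansion automatically lies in $\mathbb Z_0$, the condition ``$\frac{q_n-1}{q-1}\varepsilon=\varepsilon_n\in\mathbb Z_0$'' should be read as the genuine requirement that the rational number $\frac{q_n-1}{q-1}\varepsilon$ be a nonnegative integer; the upper bound $\varepsilon_n\le q_n-1$ then comes for free from $0\le\varepsilon\le q-1$, so $\varepsilon_n\in\Theta_n$ is a legitimate digit and nothing else needs checking. I do not expect a real obstacle here; the subtlety is precisely this correct interpretation of the integrality condition, together with not overlooking that $\varepsilon_n\le q_n-1$ must be (and is) satisfied in the ``if'' direction.
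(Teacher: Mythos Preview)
The paper does not actually prove this lemma; it is merely quoted (with citation to \cite{Ser2017,S13}) in the list of ``auxiliary statements which are true for positive Cantor series but do not hold for sign-variable Cantor series,'' and no argument is supplied. So there is no in-paper proof to compare against.

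Your argument is correct. The one formal slip is invoking the preceding lemma with $n_0=0$: as stated there, $n_0$ is required to be a positive integer, and with $n_0=1$ you would only get $\frac{\varepsilon_n}{q_n-1}=c$ for $n>1$, missing $n=1$. You anticipated this and gave the direct computation, which is the clean route anyway: from $\sigma^n(x)=x$ and the recursion $\sigma^n(x)=q_n\sigma^{n-1}(x)-\varepsilon_n$ one gets $(q_n-1)x=\varepsilon_n$ for every $n\ge 1$, hence $\frac{\varepsilon_n}{q_n-1}=x$ is constant; the telescoping identity $\sum_{k\ge 1}\frac{q_k-1}{q_1\cdots q_k}=1$ then identifies this constant with $x$ itself, and the requirement $x=\frac{\varepsilon}{q-1}$ pins it down. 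Your treatment of admissibility in the converse direction (that $0\le\varepsilon\le q-1$ forces $0\le \frac{q_n-1}{q-1}\varepsilon\le q_n-1$, so the integrality hypothesis is exactly what is needed for $\varepsilon_n\in\Theta_n$) is the right reading of the ``$\in\mathbb Z_0$'' clause. I would simply drop the appeal to the preceding lemma and present only the direct argument.
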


\begin{corollary}[\cite{Ser2017, {S13}}]
Let $n_0$ be a fixed positive integer number, $q_0=\min_{n>n_0}{q_n}$, and $\varepsilon_0$ be a numerator of the fraction 
$\frac{\varepsilon_{n_0+k}}{q_1q_2...q_{n_0}q_{n_0+1}...q_{n_0+k}}$ in expansion \eqref{eq:   series 1} of $x$ providing that $q_{n_0+k}=q_0$. Then $\sigma^n(x)=const$ for all $n\ge n_0$ if and only if the condition $\frac{q_n-1}{q_0-1}\varepsilon_0=\varepsilon_n\in\mathbb Z_0$ holds for any $n>n_0$.
\end{corollary}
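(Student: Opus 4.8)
The plan is to deduce this corollary from the last lemma above (the characterization of the fixed point $\sigma^n(x)=x=\frac{\varepsilon}{q-1}$) by applying it to the \emph{tail} of the expansion, i.e.\ to the Cantor series that remains after shifting past the index $n_0$. The intermediate lemma on $\frac{\varepsilon_n}{q_n-1}$ gives a second, more direct route, which I will also indicate.

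First I would set $y:=\sigma^{n_0}(x)=\sum_{k=n_0+1}^{\infty}\frac{\varepsilon_k}{q_{n_0+1}q_{n_0+2}\cdots q_k}$ and note that $y$ is itself a number represented by a Cantor series with base sequence $\widetilde Q\equiv(\widetilde q_m)$, where $\widetilde q_m=q_{n_0+m}$, and digits $\widetilde\varepsilon_m=\varepsilon_{n_0+m}$. If $\widetilde\sigma$ denotes the shift operator of this expansion, then a direct computation (using the formulas for $\sigma^n$ recalled above) gives $\widetilde\sigma^{\,m}(y)=\sigma^{n_0+m}(x)$ for every $m\in\mathbb Z_0$. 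Hence the hypothesis ``$\sigma^n(x)=const$ for all $n\ge n_0$'' is equivalent to ``$\widetilde\sigma^{\,m}(y)=const$ for all $m\in\mathbb Z_0$'', and putting $m=0$ forces this constant to equal $y$; conversely $\widetilde\sigma^{\,m}(y)=y$ for all $m$ trivially yields a constant tail. So the statement is reduced to: $\widetilde\sigma^{\,m}(y)=y$ for all $m$ if and only if $\frac{q_n-1}{q_0-1}\varepsilon_0=\varepsilon_n\in\mathbb Z_0$ for all $n>n_0$.

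Now I would apply the last lemma above to $y$ and $\widetilde Q$. Since $q_0=\min_{n>n_0}q_n=\min_m\widetilde q_m$ (the minimum, being an infimum of integers $\ge 2$, is attained at some index), the lemma gives: $\widetilde\sigma^{\,m}(y)=y=\frac{\varepsilon}{q_0-1}$ for all $m$ if and only if $\frac{\widetilde q_m-1}{q_0-1}\varepsilon=\widetilde\varepsilon_m\in\mathbb Z_0$ for all $m\in\mathbb N$, that is, if and only if $\frac{q_n-1}{q_0-1}\varepsilon=\varepsilon_n\in\mathbb Z_0$ for all $n>n_0$. It remains to identify the digit $\varepsilon$ with $\varepsilon_0$: choosing an index $n_0+k>n_0$ with $q_{n_0+k}=q_0$ and substituting $n=n_0+k$ into the last relation gives $\varepsilon=\varepsilon_{n_0+k}=\varepsilon_0$ in the ``only if'' direction, while in the ``if'' direction one simply takes $\varepsilon:=\varepsilon_0$ (a legitimate digit for the base $q_0$, since $\varepsilon_0=\varepsilon_{n_0+k}\in\{0,1,\dots,q_0-1\}$) and reads the lemma backwards. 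Combining the two reductions finishes the argument.

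I do not anticipate a genuine obstacle: essentially all the work is contained in the cited lemma, and the only points demanding care are the bookkeeping for the shift operator acting on the reindexed tail series and the well-definedness of $\varepsilon_0$ --- the ``only if'' direction shows that the value $\varepsilon_0$ read off at any minimum-index is forced, so the choice of $k$ in the statement is immaterial whenever the equivalent conditions hold. Alternatively, one may avoid the last lemma entirely and argue from the preceding lemma (``$\sigma^n(x)=const$ for $n\ge n_0$ if and only if $\frac{\varepsilon_n}{q_n-1}=const$ for $n>n_0$''): evaluating that constant at an index with $q_n=q_0$ identifies it as $\frac{\varepsilon_0}{q_0-1}$, and clearing denominators turns $\frac{\varepsilon_n}{q_n-1}=\frac{\varepsilon_0}{q_0-1}$ into $\varepsilon_n=\frac{q_n-1}{q_0-1}\varepsilon_0$, which automatically belongs to $\mathbb Z_0$ because $\varepsilon_n$ is a digit.
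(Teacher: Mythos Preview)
The paper does not supply its own proof of this corollary; it is simply quoted from the references \cite{Ser2017,S13} as one of several auxiliary statements. Your argument is correct, and the second route you indicate---deriving the corollary directly from the first of the two recalled lemmas by evaluating the constant $\frac{\varepsilon_n}{q_n-1}$ at an index where $q_n=q_0$---is exactly the natural one and is almost certainly what the cited sources do.

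One small remark on your first route: to apply the second lemma in the ``only if'' direction you need to know beforehand that $y=\frac{\varepsilon}{q_0-1}$ for some digit $\varepsilon\in\{0,1,\dots,q_0-1\}$, since that lemma is a biconditional for each \emph{fixed} such $\varepsilon$. This does not come for free from the second lemma alone; you obtain it either from the first lemma (which gives $\frac{\widetilde\varepsilon_m}{\widetilde q_m-1}=c$ and hence $y=c=\frac{\varepsilon_0}{q_0-1}$ at a minimum index) or from a direct telescoping computation. So the two routes are not really independent, and the second is the cleaner way to package the deduction.
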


Let us remark that the paper \cite{S13} is the translated into English   version of \cite{Ser2017}. 

The following theorem can be generalized from the case of positive Cantor series to the case of sign-variables series. However, since the theorem is complex (has complicated formulation, difficult applicable), this statement has an auxiliary character. 
 
\begin{theorem}[\cite{Ser2017, {S13}}]
A number $x$ represented by expansion \eqref{eq:   series 1} is rational if and only if there exists a subsequence $(n_k)$ of positive integers  such that for all $k=1,2,\dots ,$ the following conditions are true:
\begin{itemize}
\item
$$
\frac{\lambda_k}{\mu_k}=
\frac{\varepsilon_{n_k+1}q_{n_k+2}\dots q_{n_{k+1}}+\varepsilon_{n_k+2}q_{n_k+3}\dots q_{n_{k+1}}+\dots +\varepsilon_{n_{k+1}-1}q_{n_{k+1}}+\varepsilon_{n_{k+1}}}{q_{n_k+1}q_{n_k+2}\dots q_{n_{k+1}}-1}=const;
$$
\item $\lambda_k=\frac{\mu_k}{\mu}\lambda$, where $\mu=\min_{k\in\mathbb N}{\mu_k}$ and $\lambda$ is a number in the numerator of the fraction whose denominator equals $(\mu_1+1)(\mu_2+~1)\dots (\mu+~1)$  from sum \eqref{eq: Cantor series 6}.
\end{itemize}
\end{theorem}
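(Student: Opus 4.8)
The plan is to deduce the statement from Theorem~\ref{th: the main theorem} by regrouping the terms of the Cantor series into blocks delimited by the subsequence $(n_k)$. Put $M_k=q_{n_k+1}q_{n_k+2}\cdots q_{n_{k+1}}$, so that the fraction in the first condition has numerator $\lambda_k=\varepsilon_{n_k+1}q_{n_k+2}\cdots q_{n_{k+1}}+\cdots+\varepsilon_{n_{k+1}}$ and denominator $\mu_k=M_k-1$. A one-line telescoping estimate using $\varepsilon_i\le q_i-1$ gives $0\le\lambda_k\le M_k-1$, so regrouping \eqref{eq:  series 1} (after the finite head $\sum_{i\le n_1}\varepsilon_i/(q_1\cdots q_i)$) turns it into an honest Cantor series \eqref{eq: Cantor series 6} with bases $(M_k)=(\mu_k+1)$ and digits $(\lambda_k)$. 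The one identity driving both implications is
$$
\sigma^{n_k}(x)=\sum_{i>n_k}\frac{\varepsilon_i}{q_{n_k+1}\cdots q_i}=\frac{\lambda_k}{M_k}+\frac{1}{M_k}\,\sigma^{n_{k+1}}(x),
$$
obtained by splitting the tail at position $n_{k+1}$.

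For necessity, write $x=p/r$. From $x=\sum_{i=1}^{n}\varepsilon_i/(q_1\cdots q_i)+(q_1\cdots q_n)^{-1}\sigma^n(x)$ one gets $\sigma^n(x)=q_1\cdots q_n\,x-\sum_{i=1}^{n}\varepsilon_i q_{i+1}\cdots q_n$, so $r\sigma^n(x)\in\mathbb Z$ and hence every $\sigma^n(x)$ lies in the finite set $\tfrac1r\mathbb Z\cap[0,1]$. By the pigeonhole principle some value $s$ is attained at infinitely many positive integers $n_1<n_2<\cdots$, which I take as the subsequence. Substituting $\sigma^{n_k}(x)=\sigma^{n_{k+1}}(x)=s$ into the recursion gives $sM_k=\lambda_k+s$, hence $\lambda_k=s\mu_k$ and $\lambda_k/\mu_k=s=const$, i.e.\ the first condition. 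Since $\{\mu_k\}$ is a nonempty set of positive integers, $\mu=\min_k\mu_k$ is attained; writing $\lambda$ for the corresponding numerator occurring in \eqref{eq: Cantor series 6}, we have $s=\lambda/\mu$, so $\lambda_k=s\mu_k=(\mu_k/\mu)\lambda$, which is the second condition (integrality of $\lambda_k$ is automatic from its definition as a sum of products of nonnegative integers).

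For sufficiency, assume both conditions and set $s=\lambda/\mu$, so that $\lambda_k=s(M_k-1)$ for every $k$. With $P_j=M_kM_{k+1}\cdots M_j$ and $P_{k-1}=1$, the block expansion of $\sigma^{n_k}(x)$ telescopes,
$$
\sigma^{n_k}(x)=\sum_{j\ge k}\frac{\lambda_j}{P_j}=s\sum_{j\ge k}\Bigl(\frac{1}{P_{j-1}}-\frac{1}{P_j}\Bigr)=s,
$$
so in particular $\sigma^{n_1}(x)=\sigma^{n_2}(x)$, and Theorem~\ref{th: the main theorem} (with $n=n_1$, $m=n_2-n_1$) gives that $x$ is rational. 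Equivalently, the two conditions say that the digit string of \eqref{eq: Cantor series 6} satisfies the hypothesis $\tfrac{\mu_k}{\mu}\lambda=\lambda_k\in\mathbb Z_0$ of the constancy lemma recalled earlier, whence all shifts of \eqref{eq: Cantor series 6} are constant and \eqref{eq: Cantor series 6}, hence $x$, is rational.

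The routine work is the block bookkeeping and the two telescoping identities. The point I expect to be the real obstacle is matching the second condition --- the normalization of $\lambda$ against the minimal base $\mu+1$ appearing in \eqref{eq: Cantor series 6} --- with the integrality/constancy condition $\tfrac{q_n-1}{q-1}\varepsilon=\varepsilon_n\in\mathbb Z_0$ of the auxiliary lemma, checking that it is neither too weak (admitting an irrational $x$) nor too strong (excluding some rational $x$). One notes that the second condition already forces the first (it yields $\lambda_k/\mu_k=\lambda/\mu$), so the substance of the theorem is that $x$ is rational precisely when some blocking makes every block numerator $\lambda_k$ proportional to $\mu_k$ with common ratio $\lambda/\mu$.
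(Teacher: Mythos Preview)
The paper does not prove this theorem in the present article; it is quoted from \cite{Ser2017,S13}, with only the hint ``In the case of series \eqref{eq: Cantor series 6}, the condition $\sigma^k(x')=\text{const}$ holds for all $k=0,1,\dots$'' and the surrounding lemmas on constant shifts as indication of the intended argument. Your proof is correct and is precisely that intended argument made explicit: the block regrouping turns the tail into the Cantor series $x'$ of \eqref{eq: Cantor series 6}, your recursion $\sigma^{n_k}(x)=\lambda_k/M_k+\sigma^{n_{k+1}}(x)/M_k$ is the shift relation for $x'$, the first bullet is exactly Lemma~1 applied to $x'$, the second bullet is Lemma~2/Corollary~1 applied to $x'$, and your pigeonhole step for necessity and telescoping step for sufficiency are the standard implementations of Theorem~\ref{th: the main theorem} in this blocked setting.
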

Here 
$$
x=\sum^{\infty} _{k=1}{\frac{\varepsilon_k}{q_1q_2\dots q_k}}=\sum^{n_1} _{j=1}{\frac{\varepsilon_j}{q_1q_2\dots q_j}}+\frac{1}{q_1q_2\dots q_{n_1}}x^{'},
$$
$$
x^{'}=\sum^{\infty} _{k=1}{\frac{\varepsilon_{n_k+1}q_{n_k+2}q_{n_k+3}\dots q_{n_{k+1}}+\varepsilon_{n_k+2}q_{n_k+3}\dots q_{n_{k+1}}+\varepsilon_{n_{k+1}-1}q_{n_{k+1}}+\varepsilon_{n_{k+1}}}{(q_{n_1+1}\dots q_{n_2})(q_{n_2+1}\dots q_{n_3})\dots (q_{n_k+1}\dots q_{n_{k+1}})}}
$$
\begin{equation}
\label{eq: Cantor series 6}
=\sum^{\infty} _{k=1}{\frac{\lambda_k}{(\mu_1+1)\dots (\mu_k+1)}}.
\end{equation}
 In the case of series~\eqref{eq: Cantor series 6}, the condition $\sigma^k(x^{'})=const$ holds for all $k=~0,1,\dots$.

Finally, let us consider the following necessary and sufficient condition which is useful for modeling rational numbers by positive Cantor series. 

\begin{theorem}[\cite{Serbenyuk20}]
\label{th: modeling}
A number $x=\Delta^Q _{\varepsilon_1\varepsilon_2...\varepsilon_n...} \in (0,1)$ represented by series \eqref{eq: series 1}  is a rational number $\frac{p}{r}$, where $p,r\in\mathbb N, (p,r)=1$, and $p<r$, if and only if the condition 
$$
\varepsilon_n=\left[\frac{q_n(\Delta_{n-1}-r\varepsilon_{n-1})}{r}\right]
$$
 holds for all $1<n\in\mathbb N$, where $\Delta_1=pq_1$, $\varepsilon_1=\left[\frac{\Delta_1}{r}\right]$, and 
 $[a]$ is the integer part of $a$.
\end{theorem}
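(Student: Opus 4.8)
The plan is to read the displayed recurrence as the greedy, digit‑by‑digit algorithm producing the Cantor expansion of $p/r$, organised through an auxiliary integer sequence. Put $\Delta_n:=q_n(\Delta_{n-1}-r\varepsilon_{n-1})$ for $n>1$; then the condition of the theorem is exactly $\varepsilon_n=[\Delta_n/r]$ for all $n$, the case $n=1$ being the stated seed $\Delta_1=pq_1$, $\varepsilon_1=[\Delta_1/r]$. Writing $x_n=\sum_{i=1}^{n}\varepsilon_i(q_1\cdots q_i)^{-1}$ for the $n$‑th partial sum, the backbone of the whole argument will be the identity
\begin{equation}
\label{eq:modeling-backbone}
q_1q_2\cdots q_n\left(\frac{p}{r}-x_n\right)=\frac{\Delta_n-r\varepsilon_n}{r},\qquad n\in\mathbb N,
\end{equation}
which I would establish by induction on $n$: for $n=1$ it is the seed, and for the step one multiplies $\frac pr-x_n=\bigl(\frac pr-x_{n-1}\bigr)-\varepsilon_n(q_1\cdots q_n)^{-1}$ by $q_1\cdots q_n$ and inserts the hypothesis for $n-1$ together with $\Delta_n=q_n(\Delta_{n-1}-r\varepsilon_{n-1})$. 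Crucially, \eqref{eq:modeling-backbone} holds for \emph{any} digit string, not only those obeying the recurrence.

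I would then record the elementary bookkeeping: once $\varepsilon_n=[\Delta_n/r]$ is assumed, an induction gives $\Delta_n\ge0$ for all $n$ (since $\Delta_1=pq_1>0$ and $\Delta_n-r\varepsilon_n$ is the residue of $\Delta_n$ modulo $r$, so $\Delta_{n+1}=q_{n+1}(\Delta_n-r\varepsilon_n)\ge0$); hence $0\le\Delta_n-r\varepsilon_n\le r-1$, and $\Delta_{n+1}<rq_{n+1}$ forces $\varepsilon_{n+1}=[\Delta_{n+1}/r]\le q_{n+1}-1$, so the recurrence produces an admissible digit string in the first place. Sufficiency then drops out of \eqref{eq:modeling-backbone}: $0\le\frac pr-x_n<\frac{1}{q_1\cdots q_n}\le 2^{-n}$, so $x=\lim_n x_n=\frac pr$.

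For necessity, assume $x=\Delta^Q_{\varepsilon_1\varepsilon_2\dots}=\frac pr$ and take the canonical representation, i.e.\ the one with $\sigma^n(x)\in[0,1)$ for every $n$ (every $Q$‑rational number, detected by Theorem~\ref{theorem2}, also admits a finite expansion, and it is this one that the recurrence picks out). Combining $x-x_n=(q_1\cdots q_n)^{-1}\sigma^n(x)$ with \eqref{eq:modeling-backbone} gives $\sigma^n(x)=(\Delta_n-r\varepsilon_n)/r$, hence $q_n\sigma^{n-1}(x)=q_n(\Delta_{n-1}-r\varepsilon_{n-1})/r=\Delta_n/r$; on the other hand $\sigma^{n-1}(x)=\varepsilon_n/q_n+q_n^{-1}\sigma^n(x)$ with $\sigma^n(x)\in[0,1)$ forces $\varepsilon_n=[q_n\sigma^{n-1}(x)]$, and comparing yields exactly $\varepsilon_n=\left[\frac{q_n(\Delta_{n-1}-r\varepsilon_{n-1})}{r}\right]$.

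I expect the one genuinely delicate point to be this representation‑choice issue in the necessity direction: for a $Q$‑rational number the infinite ``$(q_k-1)$‑tail'' expansion still satisfies \eqref{eq:modeling-backbone} but has $\sigma^n(x)=1$ for all large $n$, which destroys the identity $\varepsilon_n=[q_n\sigma^{n-1}(x)]$, so the statement must be understood for the canonical (equivalently, finite) expansion. Everything else is the two routine inductions sketched above.
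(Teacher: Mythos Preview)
Your proposal is correct and follows essentially the same route as the paper. The paper does not prove Theorem~\ref{th: modeling} directly (it is cited from \cite{Serbenyuk20}), but it proves the sign-variable generalisation, and specialising that argument to $N_B=\emptyset$ gives exactly the greedy digit-extraction you describe: the paper's cylinder inequality $\inf\Lambda^{Q}_{\varepsilon_1\ldots\varepsilon_k}\le p/r<\sup\Lambda^{Q}_{\varepsilon_1\ldots\varepsilon_k}$ is your condition $\sigma^k(x)\in[0,1)$, and the paper's quantity $\Delta^{(g)}_k=pq_1\cdots q_k-r\varsigma^{(g)}_k$ satisfies precisely your backbone identity $q_1\cdots q_k\bigl(\tfrac pr-x_k\bigr)=(\Delta_k-r\varepsilon_k)/r$. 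Your handling of the $Q$-rational ambiguity via the canonical (finite-tail) expansion matches the paper's half-open convention in the Remark inside that proof. The only substantive addition in your version is that you also supply the sufficiency direction and verify that the recurrence produces admissible digits $\varepsilon_n\le q_n-1$; the paper's sign-variable theorem is stated and proved only as an implication, so this extra care is appropriate for the ``if and only if'' formulation of Theorem~\ref{th: modeling}.
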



\section{Sign-variable Cantor series, cylinders,  and the shift-operator}

Using the analogy used in proofs in~\cite{S.Serbenyuk, Serbenyuk2017}, one can prove that each number $x$ from a certain closed interval  $ [a^{'} _{\pm Q}, a^{''} _{\pm Q}]$ can be represented by  series~\eqref{eq: sign-variable series}.
Since
$$
\sum^{\infty} _{k=1}{\frac{q_k-1}{q_1q_2\cdots q_k}}=1,
$$
we have
$$
a^{'} _{\pm Q}=-\sum_{1 \le k \in N_B} {\frac{q_k-1}{q_1q_2\cdots q_k}}
$$
and
$$
a^{''} _{\pm Q}=\sum_{1\le k \notin N_B} {\frac{q_k-1}{q_1q_2\cdots q_k}}=1- \sum_{1 \le k \in N_B} {\frac{q_k-1}{q_1q_2\cdots q_k}}
$$

Certain numbers from  $ [a^{'} _{\pm Q}, a^{''} _{\pm Q}]$ have two different  quasi-nega-Q-representations and  are called \emph{quasi-nega-Q-rational}.  There are numbers of the following forms: 
\begin{enumerate}
\item if $k,k+1 \in N_B$, then
$$
\Delta^{(\pm Q, N_B)} _{\varepsilon_1\varepsilon_2...\varepsilon_{k-1}[q_k-1-\varepsilon_k][q_{k+1}-1]\beta_{k+2}\beta_{k+3}...}=\Delta^{(\pm Q, N_B)} _{\varepsilon_1\varepsilon_2...\varepsilon_{k-1}[q_k-\varepsilon_k]0\gamma_{k+2}\gamma_{k+3}...};
$$
\item if $k \in N_B$, $k+1 \notin N_B$, then
$$
\Delta^{(\pm Q, N_B)} _{\varepsilon_1\varepsilon_2...\varepsilon_{k-1}[q_k-1-\varepsilon_k]0\beta_{k+2}\beta_{k+3}...}=\Delta^{(\pm Q, N_B)} _{\varepsilon_1\varepsilon_2...\varepsilon_{k-1}[q_k-\varepsilon_k][q_{k+1}-1]\gamma_{k+2}\gamma_{k+3}...};
$$
\item if $k \notin N_B$, $k+1 \in N_B$, then
$$
\Delta^{(\pm Q, N_B)} _{\varepsilon_1\varepsilon_2...\varepsilon_{k-1}\varepsilon_k[q_{k+1}-1]\beta_{k+2}\beta_{k+3}...}=\Delta^{(\pm Q, N_B)} _{\varepsilon_1\varepsilon_2...\varepsilon_{k-1}[\varepsilon_k-1]0\gamma_{k+2}\gamma_{k+3}...};
$$
\item if $k \notin N_B$, $k+1 \notin N_B$, then
$$
\Delta^{(\pm Q, N_B)} _{\varepsilon_1\varepsilon_2...\varepsilon_{k-1}\varepsilon_k0\beta_{k+2}\beta_{k+3}...}=\Delta^{(\pm Q, N_B)} _{\varepsilon_1\varepsilon_2...\varepsilon_{k-1}[\varepsilon_k-1][q_{k+1}-1]\gamma_{k+2}\gamma_{k+3}...}.
$$
\end{enumerate}
Here  $\varepsilon_k\ne 0$, 
\begin{equation*}
\label{eq: znakozminnyi-s-rozklad 2}
\beta_k=\begin{cases}
0&\text{whenever $k \notin N_B$}\\
q_k-1&\text{whenever $k \in N_B$,}
\end{cases}
\end{equation*}
and 
\begin{equation*}
\label{eq: znakozminnyi-s-rozklad 3}
\gamma_k=\begin{cases}
q_k-1&\text{whenever $k \notin N_B$}\\
0&\text{whenever $k \in N_B$.}
\end{cases}
\end{equation*}

The other numbers in $ [a^{'} _{\pm Q}, a^{''} _{\pm Q}]$  are called \emph{quasi-nega-Q-irrational} and have the unique quasi-nega-Q-representation.

Let $c_1,c_2, \dots , c_m$ be an ordered tuple of integers such that $c_i\in \{0,1, \dots , q_i-1\}$ for all $i=\overline{1,m}$. Then
\emph{a cylinder $\Lambda^{(\pm Q, N_B)} _{c_1c_2...c_m}$ of rank $m$ with base $c_1c_2\ldots c_m$} is a set of the form
$$
\Lambda^{(\pm Q, N_B)} _{c_1c_2...c_m}\equiv\{x: x=\Delta^{(\pm Q, N_B)} _{c_1c_2...c_m\varepsilon_{m+1}\varepsilon_{m+2}\ldots\varepsilon_{m+k}\ldots}\}.
$$

\begin{lemma}
A  cylinder $\Lambda^{(\pm Q, N_B)} _{c_1c_2...c_m}$  is a  closed interval, i.e.,
$$
\Lambda^{(\pm Q, N_B)} _{c_1c_2...c_m}=\left[\sum^{m} _{i=1}{\frac{a_ic_i}{q_1q_2\cdots q_i}}-\sum_{m< k \in N_B} {\frac{q_k-1}{q_1q_2\cdots q_k}}, \sum^{m} _{i=1}{\frac{a_ic_i}{q_1q_2\cdots q_i}}+\sum_{m< k \notin N_B} {\frac{q_k-1}{q_1q_2\cdots q_k}}\right].
$$
\end{lemma}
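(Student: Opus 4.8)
The plan is to prove the two set inclusions, writing throughout $S_m=\sum_{i=1}^{m}\frac{a_ic_i}{q_1q_2\cdots q_i}$ and letting $I_m=[A_m,B_m]$ denote the closed interval on the right-hand side, so $A_m=S_m-\sum_{m<k\in N_B}\frac{q_k-1}{q_1q_2\cdots q_k}$ and $B_m=S_m+\sum_{m<k\notin N_B}\frac{q_k-1}{q_1q_2\cdots q_k}$. I will also use the telescoping identity $\sum_{k>n}\frac{q_k-1}{q_1q_2\cdots q_k}=\frac{1}{q_1q_2\cdots q_n}$, valid for every $n\ge0$.

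The inclusion $\Lambda^{(\pm Q, N_B)}_{c_1c_2...c_m}\subseteq I_m$ is elementary. For any admissible digits $\varepsilon_{m+1},\varepsilon_{m+2},\dots$ the tail $T=\sum_{k>m}\frac{a_k\varepsilon_k}{q_1q_2\cdots q_k}$ splits as $T=T_+-T_-$ with $T_+=\sum_{m<k\notin N_B}\frac{\varepsilon_k}{q_1q_2\cdots q_k}\ge0$ and $T_-=\sum_{m<k\in N_B}\frac{\varepsilon_k}{q_1q_2\cdots q_k}\ge0$; bounding $0\le\varepsilon_k\le q_k-1$ term by term gives $-\sum_{m<k\in N_B}\frac{q_k-1}{q_1q_2\cdots q_k}\le T\le\sum_{m<k\notin N_B}\frac{q_k-1}{q_1q_2\cdots q_k}$, that is $x=S_m+T\in I_m$. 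Both endpoints are attained: $B_m$ by taking $\varepsilon_k=q_k-1$ for $k\notin N_B$ and $\varepsilon_k=0$ for $k\in N_B$ (all $k>m$), and $A_m$ by the opposite choice.

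For the reverse inclusion --- in fact for the whole identity at once --- the clean route is the self-similarity of the construction. Every element of $\Lambda^{(\pm Q, N_B)}_{c_1c_2...c_m}$ has the form $S_m+\frac{1}{q_1q_2\cdots q_m}\,y$, where $y=\sum_{j\ge1}\frac{a_{m+j}\varepsilon_{m+j}}{q_{m+1}\cdots q_{m+j}}$ runs over exactly the numbers representable by the sign-variable Cantor series built from the shifted data $(q_{m+1},q_{m+2},\dots)$ and the shifted sign set $\{j\in\mathbb N:m+j\in N_B\}$. The affine map $y\mapsto S_m+\frac{1}{q_1q_2\cdots q_m}y$ carries the interval of representable numbers for that shifted system onto $I_m$ (here one uses the telescoping identity to identify the two endpoints), so the claim reduces to the fact recalled earlier in this section that every point of such an interval admits a representation. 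If instead one wishes to argue from scratch, the same fact is proved by a nested-interval (greedy) construction: given a target $t\in I_m$, one chooses $\varepsilon_{m+1},\varepsilon_{m+2},\dots$ recursively so that at every rank $n\ge m$ the point $t$ lies in the candidate rank-$n$ interval $[A_n,B_n]$ attached to the prefix $c_1\dots c_m\varepsilon_{m+1}\dots\varepsilon_n$; since $B_n-A_n=\frac{1}{q_1q_2\cdots q_n}\to0$ while $A_n\to\Delta^{(\pm Q, N_B)}_{c_1...c_m\varepsilon_{m+1}\varepsilon_{m+2}...}$, the nested intersection forces $t=\Delta^{(\pm Q, N_B)}_{c_1...c_m\varepsilon_{m+1}\varepsilon_{m+2}...}\in\Lambda^{(\pm Q, N_B)}_{c_1c_2...c_m}$.

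The key combinatorial step, which I expect to be the main obstacle precisely because it is where the sign-variable setting parts company with ordinary Cantor series, is the covering/partition fact underlying both versions of the argument: for a fixed prefix of length $n$, the $q_{n+1}$ candidate sub-intervals of rank $n+1$ obtained by appending $c_{n+1}=0,1,\dots,q_{n+1}-1$ cover $[A_n,B_n]$ and meet only in shared endpoints. Establishing it needs a case split on whether $n+1\in N_B$: when $a_{n+1}=1$ the rank-$(n+1)$ sub-interval moves to the right as $c_{n+1}$ increases, whereas when $a_{n+1}=-1$ it moves to the left, and in each case one verifies, using the telescoping identity, that the extreme digits $c_{n+1}=0$ and $c_{n+1}=q_{n+1}-1$ recover the endpoints $A_n,B_n$ and that consecutive sub-intervals abut exactly. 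Once this bookkeeping is done, $I_m\subseteq\Lambda^{(\pm Q, N_B)}_{c_1c_2...c_m}$ follows, and with the first inclusion this gives the stated equality. The argument can equally be packaged as an induction on $m$: the base case $m=0$ is the representability statement quoted above, and the inductive step is exactly this covering identity together with the decomposition $\Lambda^{(\pm Q, N_B)}_{c_1...c_m}=\bigcup_{c_{m+1}=0}^{q_{m+1}-1}\Lambda^{(\pm Q, N_B)}_{c_1...c_mc_{m+1}}$.
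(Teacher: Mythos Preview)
Your argument is correct. For the inclusion $\Lambda^{(\pm Q,N_B)}_{c_1\ldots c_m}\subseteq I_m$ you follow exactly the paper's route: bound the tail term by term and observe that the two extremal digit choices realise the endpoints. For the reverse inclusion you do more than the paper, whose proof, after establishing $\Lambda\subseteq[x',x'']$, simply records that $x'$ and $x''$ lie in $\Lambda$ and stops; the implicit completion there is the representability statement quoted just before the lemma, and your first reduction via the affine map $y\mapsto S_m+\frac{1}{q_1\cdots q_m}y$ makes that step explicit. Your alternative self-contained greedy/nested-interval construction, together with the covering analysis split on whether $n+1\in N_B$, is additional material not present in the paper but entirely correct. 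In short: the same core idea, but your write-up actually closes the gap the paper's terse proof leaves open.
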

\begin{proof}
Let    $x \in \Lambda^{(\pm Q, N_B)} _{c_1c_2\ldots c_m}$, i.e., 
$$
x=\sum^{m} _{i=1} {\frac{a_ic_i}{q_1q_2\cdots q_i}}+\sum^{\infty} _{j=m+1} {\frac{a_j\varepsilon_j}{q_1q_2\cdots q_j}}, 
$$
where $ \varepsilon_j \in \{0,1,\dots ,q_j-1\}$; then
$$
x^{'}=\sum^{m} _{i=1}{\frac{a_ic_i}{q_1q_2\cdots q_i}}-\sum_{m< k \in N_B} {\frac{q_k-1}{q_1q_2\cdots q_k}} \le x\le \sum^{m} _{i=1}{\frac{a_ic_i}{q_1q_2\cdots q_i}}+\sum_{m< k \notin N_B} {\frac{q_k-1}{q_1q_2\cdots q_k}}=x^{''}.
$$
Hence  $x \in [x^{'},x^{''}]$ and $\Lambda^{(\pm Q, N_B)} _{c_1c_2\ldots c_m}\subseteq [x^{'},x^{''}]$.

Since the equalities 
$$
-\sum_{m< k \in N_B} {\frac{q_k-1}{q_1q_2\cdots q_k}}=\frac{1}{q_1q_2\cdots q_m}\inf\sum^{\infty} _{j=m+1}{\frac{a_j \varepsilon_j}{q_{m+1}q_{m+2}\cdots q_j}}
$$
and 
$$
\sum_{m< k \notin N_B} {\frac{q_k-1}{q_1q_2\cdots q_k}}=\frac{1}{q_1q_2\cdots q_m}\sup\sum^{\infty} _{j=m+1}{\frac{a_j \varepsilon_j}{q_{m+1}q_{m+2}\cdots q_j}}
$$
hold, we have 
 $x \in \Lambda^{(\pm Q, N_B)} _{c_1c_2\ldots c_m}$ and $x^{'}, x^{''} \in \Lambda^{(\pm Q, N_B)} _{c_1c_2\ldots c_m}$.
\end{proof}

Let us consider the useful notion of the shift operator of expansion \eqref{eq: sign-variable series}.  Define \emph{the shift operator $\sigma$ of expansion \eqref{eq: sign-variable series}} by the rule
$$
\sigma(x)=\sigma\left(\Delta^{(\pm Q, N_B)} _{\varepsilon_1\varepsilon_2\ldots\varepsilon_k\ldots}\right)=\sum^{\infty} _{k=2}{\frac{a_k\varepsilon_k}{q_2q_3\dots q_k}}=q_1\Delta^{(\pm Q, N_B)} _{0\varepsilon_2\ldots\varepsilon_k\ldots}.
$$
It is easy to see that 
\begin{equation}
\label{eq:  series 2}
\begin{split}
\sigma^n(x) &=\sigma^n\left(\Delta^{(\pm Q, N_B)} _{\varepsilon_1\varepsilon_2\ldots\varepsilon_k\ldots}\right)\\
& =\sum^{\infty} _{k=n+1}{\frac{a_k\varepsilon_k}{q_{n+1}q_{n+2}\dots q_k}}=q_1\dots q_n\Delta^{(\pm Q, N_B)} _{\underbrace{0\ldots 0}_{n}\varepsilon_{n+1}\varepsilon_{n+2}\ldots}.
\end{split}
\end{equation}
Therefore, 
\begin{equation}
\label{eq:  sign-variable series 3}
x=\sum^{n} _{i=1}{\frac{a_i\varepsilon_i}{q_1q_2\dots q_i}}+\frac{1}{q_1q_2\dots q_n}\sigma^n(x).
\end{equation}

\section{Rational numbers defined in terms of sign-variable Cantor series}

\begin{theorem}
\label{th: the main theorem}
A number $x$ represented by series \eqref{eq: sign-variable series} is  rational for the case of any $N_B \subseteq~\mathbb N$ if and only if  there exist numbers $n\in\mathbb Z_0$ and $m\in\mathbb N$ such that $\sigma^n(x)=\sigma^{n+m}(x)$.
\end{theorem}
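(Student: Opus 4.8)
The plan is to imitate the scheme used for the analogous statement about positive Cantor series recalled in the introduction, but to replace the monotonicity arguments available there — which break down once the signs $a_k$ vary — by a finiteness argument applied to the values of the shifts $\sigma^n(x)$. The whole proof rests on two identities already available in Section~2: the telescoping relation
\[
x=\sum_{i=1}^{n}\frac{a_i\varepsilon_i}{q_1q_2\cdots q_i}+\frac{1}{q_1q_2\cdots q_n}\,\sigma^n(x)
\]
and, more generally,
\[
\sigma^n(x)=\sum_{k=n+1}^{n+m}\frac{a_k\varepsilon_k}{q_{n+1}\cdots q_k}+\frac{1}{q_{n+1}\cdots q_{n+m}}\,\sigma^{n+m}(x),
\]
together with the normalisation $\sum_{k=n+1}^{\infty}\frac{q_k-1}{q_{n+1}\cdots q_k}=1$.

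\emph{Sufficiency.} Assume $\sigma^n(x)=\sigma^{n+m}(x)$ for some $n\in\mathbb Z_0$ and $m\in\mathbb N$. Substituting $\sigma^{n+m}(x)=\sigma^{n}(x)$ into the second identity and solving the resulting linear equation for $\sigma^n(x)$ yields
\[
\sigma^n(x)=\frac{q_{n+1}q_{n+2}\cdots q_{n+m}}{q_{n+1}q_{n+2}\cdots q_{n+m}-1}\sum_{k=n+1}^{n+m}\frac{a_k\varepsilon_k}{q_{n+1}q_{n+2}\cdots q_k}\in\mathbb Q,
\]
where no division by zero occurs because $q_{n+1}\cdots q_{n+m}>1$; then $x=\sum_{i=1}^{n}\frac{a_i\varepsilon_i}{q_1\cdots q_i}+\frac{1}{q_1\cdots q_n}\sigma^n(x)\in\mathbb Q$. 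This direction uses nothing about the signs.

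\emph{Necessity.} Write $x=\frac{p}{r}$ with $p\in\mathbb Z$ and $r\in\mathbb N$. By the first identity,
\[
\sigma^n(x)=q_1q_2\cdots q_n\Bigl(x-\sum_{i=1}^{n}\frac{a_i\varepsilon_i}{q_1q_2\cdots q_i}\Bigr)=\frac{p\,q_1q_2\cdots q_n-r\sum_{i=1}^{n}a_i\varepsilon_i q_{i+1}\cdots q_n}{r}=\frac{s_n}{r}
\]
with $s_n\in\mathbb Z$, so every shift $\sigma^n(x)$ is a fraction with denominator $r$. On the other hand, since $|a_k\varepsilon_k|\le q_k-1$,
\[
|\sigma^n(x)|=\Bigl|\sum_{k=n+1}^{\infty}\frac{a_k\varepsilon_k}{q_{n+1}\cdots q_k}\Bigr|\le\sum_{k=n+1}^{\infty}\frac{q_k-1}{q_{n+1}\cdots q_k}=1,
\]
hence $|s_n|\le r$ and $s_n\in\{-r,-r+1,\dots,r-1,r\}$ for all $n$. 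Thus $\bigl(\sigma^n(x)\bigr)_{n\in\mathbb Z_0}$ takes at most $2r+1$ distinct values, and by the pigeonhole principle there are indices $n_1<n_2$ with $\sigma^{n_1}(x)=\sigma^{n_2}(x)$; putting $n=n_1$ and $m=n_2-n_1\in\mathbb N$ finishes the argument.

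I expect the necessity direction to be the only delicate point, and within it the combination of the two facts that all $\sigma^n(x)$ share the denominator $r$ and that $|\sigma^n(x)|\le 1$: it is precisely their conjunction that traps the shifts in a finite set, and this is what replaces the monotone-subsequence reasoning used for positive series. Everything else is the algebra of Section~2 and routine bookkeeping; in particular, nothing in the proof depends on the choice of $N_B$, because the signs $a_k$ enter only through $|a_k|=1$.
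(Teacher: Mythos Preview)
Your proof is correct and follows essentially the same route as the paper's: for sufficiency you solve the linear equation coming from $\sigma^n(x)=\sigma^{n+m}(x)$ to exhibit $\sigma^n(x)$ and hence $x$ as a rational number, and for necessity you show each $\sigma^n(x)$ is an integer over the fixed denominator $r$ with $|\sigma^n(x)|\le 1$, then invoke the pigeonhole principle. The paper proceeds identically (writing $\sigma^n(x)=u_n/v$ with $u_n\in\{-v,\ldots,v\}$); your explicit justification of the bound via $|a_k\varepsilon_k|\le q_k-1$ and $\sum_{k>n}\frac{q_k-1}{q_{n+1}\cdots q_k}=1$ is a small but welcome addition.
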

\begin{proof} Let us prove that  \emph{the necessity} is true.  Suppose we have a rational number $x=\frac{u}{v}$, where $|u|<v$, $v\in\mathbb N, u\in \mathbb Z$, and $(|u|,v)=1$. Consider the sequence $(\sigma^n(x))$ generated by the shift operator of expansion \eqref{eq: sign-variable series} of the number $x$. That is,
\begin{gather*}
\sigma^0(x)=x,\\
\sigma(x)=q_1x-a_1\varepsilon_1,\\
\sigma^2(x)=q_2\sigma(x)-a_2\varepsilon_2=q_1q_2x-a_1q_2\varepsilon_1-a_2\varepsilon_2,\\
\dots \dots \dots \dots \dots \dots \dots \\
\sigma^n(x)=q_n\sigma^{n-1}(x)-a_n\varepsilon_{n}=x\prod^{n} _{i=1}{q_i}-\left(\sum^{n-1} _{j=1}{a_j\varepsilon_jq_{j+1}q_{j+2}\dots q_n}\right)-a_n\varepsilon_n,\\
\dots \dots \dots \dots \dots \dots \dots
\end{gather*}
Hence using equality \eqref{eq:  sign-variable series 3}, we obtain
\begin{equation*}
\label{eq: Cantor series4}
\sigma^n(x)=\frac{uq_1q_2\dots q_n-v(a_1\varepsilon_1q_2\dots q_n+\dots +a_{n-1}\varepsilon_{n-1}q_n+a_n\varepsilon_n)}{v}=\frac{u_n}{v}.
\end{equation*}

Let us remark that $|\sigma^n(x)|=\left|\sigma^n\left(\Delta^{(\pm Q, N_B)} _{\varepsilon_1\varepsilon_2...\varepsilon_k...}\right)\right|\le 1$ for any set $N_B$.

 Since $v=const$ and the condition $\frac{|u_n|}{|v|}\le 1$ holds as $n\to\infty$, we have 
$$
u_n\in\{-v, -(v-1), \dots , -1, 0,1,\dots ,v-1, v\}.
$$
 Thus there exist numbers  $n, m\in\mathbb N$ such that $u_n=u_{n+m}$. In addition, there exists a sequence $(n_k)$   of positive integers such that $u_{n_k}=const$. 

Let us prove \emph{the sufficiency.}  Suppose there exist $n\in \mathbb Z_0=\mathbb N \cup \{0\}$ and $m\in\mathbb N$ such that $\sigma^n(x)=\sigma^{n+m}(x)$. Then
$$
\sigma^n(x) =\sigma^n\left(\Delta^{(\pm Q, N_B)} _{\varepsilon_1\varepsilon_2\ldots\varepsilon_k\ldots}\right)=q_1\dots q_n\Delta^{(\pm Q, N_B)} _{\underbrace{0\ldots 0}_{n+m}\varepsilon_{n+m+1}\varepsilon_{n+m+2}\ldots},
$$
$$
\sigma^{n+m}(x) =\sigma^{n+m}\left(\Delta^{(\pm Q, N_B)} _{\varepsilon_1\varepsilon_2\ldots\varepsilon_k\ldots}\right)=q_1\dots q_n\dots q_{n+m}\Delta^{(\pm Q, N_B)} _{\underbrace{0\ldots 0}_{n}\varepsilon_{n+1}\varepsilon_{n+2}\ldots},
$$
and 
$$
\sigma^n(x) =\sigma^{n+m}(x) =\frac{q_1q_2\dots q_nq_{n+1}\dots q_{n+m}}{q_{n+1}\dots q_{n+m}-1}\Delta^{(\pm Q, N_B)} _{\underbrace{0\ldots 0}_n\varepsilon_{n+1}\varepsilon_{n+2}\ldots \varepsilon_{n+m}000\ldots }.
$$

 So, from equality \eqref{eq:  series 2} it follows that 
$$
x=\Delta^{(\pm Q, N_B)} _{\varepsilon_1\varepsilon_2\ldots \varepsilon_n000\ldots }+\frac{q_{n+1}\dots q_{n+m}}{q_{n+1}\dots q_{n+m}-1}\Delta^{(\pm Q, N_B)} _{\underbrace{0\ldots 0}_n\varepsilon_{n+1}\varepsilon_{n+2}\ldots \varepsilon_{n+m}000\ldots }.
$$
That is, $x$ is a rational number. 
\end{proof}

It is easy to see that the last and following theorems are equivalent. 

\begin{theorem}
\label{th: the main theorem 2}
A number $x=\Delta^{(\pm Q, N_B)} _{\varepsilon_1\varepsilon_2\ldots \varepsilon_k\ldots }$  is  rational if and only if there exist numbers $n\in\mathbb Z_0$ and $m\in\mathbb N$ such that
$$
\Delta^{(\pm Q, N_B)} _{\underbrace{0\ldots 0}_{n}\varepsilon_{n+1}\varepsilon_{n+2}\ldots }=q_{n+1}\dots q_{n+m}\Delta^{(\pm Q, N_B)} _{\underbrace{0\ldots 0}_{n+m}\varepsilon_{n+m+1}\varepsilon_{n+m+2}\ldots }.
$$
\end{theorem}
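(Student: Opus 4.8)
The plan is to obtain this theorem as an immediate reformulation of the preceding Theorem~\ref{th: the main theorem}, by observing that the displayed identity is nothing but the equality $\sigma^n(x)=\sigma^{n+m}(x)$ after dividing by the partial product $q_1q_2\cdots q_n$. The key tool is the shift-operator identity established in the previous section, namely
\[
\sigma^n(x)=q_1q_2\cdots q_n\,\Delta^{(\pm Q, N_B)} _{\underbrace{0\ldots 0}_{n}\varepsilon_{n+1}\varepsilon_{n+2}\ldots},
\qquad\text{equivalently}\qquad
\Delta^{(\pm Q, N_B)} _{\underbrace{0\ldots 0}_{n}\varepsilon_{n+1}\varepsilon_{n+2}\ldots}=\frac{1}{q_1q_2\cdots q_n}\,\sigma^n(x),
\]
valid for every $n\in\mathbb Z_0$, and likewise with $n$ replaced by $n+m$.

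First I would substitute these two expressions into the asserted equality. Its right-hand side becomes
\[
q_{n+1}\cdots q_{n+m}\cdot\frac{1}{q_1q_2\cdots q_{n+m}}\,\sigma^{n+m}(x)=\frac{1}{q_1q_2\cdots q_n}\,\sigma^{n+m}(x),
\]
since the factors $q_{n+1},\ldots,q_{n+m}$ cancel against the corresponding part of $q_1q_2\cdots q_{n+m}$. Hence the displayed identity is equivalent to $\tfrac{1}{q_1\cdots q_n}\sigma^n(x)=\tfrac{1}{q_1\cdots q_n}\sigma^{n+m}(x)$, and, multiplying through by the nonzero integer $q_1q_2\cdots q_n$ (each $q_i>1$), to $\sigma^n(x)=\sigma^{n+m}(x)$. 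Both directions of the theorem then follow at once from Theorem~\ref{th: the main theorem}: $x$ is rational if and only if there exist $n\in\mathbb Z_0$, $m\in\mathbb N$ with $\sigma^n(x)=\sigma^{n+m}(x)$, which we have just shown is the same as the existence of $n,m$ satisfying the displayed identity.

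There is essentially no obstacle here: the mathematical content is carried entirely by Theorem~\ref{th: the main theorem}, and the present statement is a purely cosmetic rephrasing of it. The only points deserving a word of care are that both $\Delta$-expressions must be read as referring to the same number $x$ with the same sign sequence $(a_n)$ determined by $N_B$, and that the cancellation of the partial products is legitimate precisely because $q_i>1$ for all $i$. (If one preferred, the same chain of equalities already appearing in the proof of Theorem~\ref{th: the main theorem} could be used to give a self-contained argument, but routing through that theorem is shorter and makes the equivalence transparent, which is why the statement is flagged as ``equivalent'' to the previous one.)
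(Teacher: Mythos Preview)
Your proposal is correct and matches the paper's approach: the paper does not give a separate proof but simply remarks that this theorem is equivalent to Theorem~\ref{th: the main theorem}, and your argument spells out exactly that equivalence via the shift-operator identity \eqref{eq:  series 2}.
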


In the case of sign-variable Cantor series, a version of Theorem \ref{th: modeling} has some  differences. 

\begin{theorem}
If $x=\Delta^{(\pm Q, N_B)} _{\varepsilon_1\varepsilon_2...\varepsilon_k...} =\frac{p}{r}$, where $p\in\mathbb Z, r\in\mathbb N, (|p|, r)=1$, and $|p|<r$, then the condition 
$$
\varepsilon_k=\left|\left[\frac{q_k(\Delta^{(g)} _{k-1}-a_{k-1}r\varepsilon_{k-1})}{r}+s_n\right]\right|
$$
 holds for all $1<k\in\mathbb N$. Here $\Delta^{(g)} _1=pq_1$,  $\varepsilon_1=\left|\left[\frac{\Delta^{(g)} _1}{r}+s_1\right]\right|$, and
 $[a]$ is the integer part of $a$. Also, 
$$
s_1=\sum_{1<k \in N_B}{\frac{q_k-1}{q_2q_3\cdots q_k}},
\qquad
s_k=\begin{cases}
q_ks_{k-1}&\text{whenever $k\notin N_B$}\\
q_ks_{k-1}-(q_k-1)&\text{whenever $k\in N_B$.}
\end{cases}
$$
\end{theorem}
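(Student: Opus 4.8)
The plan is to follow the proof scheme of Theorem~\ref{th: modeling} for positive Cantor series; the only genuinely new ingredient is the family of correction terms $s_k$, whose role is to cancel the negative contributions $a_j\varepsilon_j$ with $j\in N_B$. First I would introduce the auxiliary integers $\Delta^{(g)} _k$ determined by $\Delta^{(g)} _1=pq_1$ and $\Delta^{(g)} _k=q_k\bigl(\Delta^{(g)} _{k-1}-a_{k-1}r\varepsilon_{k-1}\bigr)$ for $k>1$ (the quantity left implicit in the statement, matching the $\Delta_n$ of Theorem~\ref{th: modeling}), so that the asserted identity reads $\varepsilon_k=\bigl|\bigl[\Delta^{(g)} _k/r+s_k\bigr]\bigr|$ for every $k\ge 1$ (reading $s_k$ for the misprint $s_n$). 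Since $\sigma^k(x)=q_k\sigma^{k-1}(x)-a_k\varepsilon_k$ by~\eqref{eq: series 2}--\eqref{eq: sign-variable series 3}, a short induction on $k$ gives
$$
\frac{\Delta^{(g)} _k}{r}=q_k\sigma^{k-1}(x)\qquad\text{for all }k\ge 1,
$$
so the whole statement is reduced to proving that $\varepsilon_k=\bigl|\bigl[q_k\sigma^{k-1}(x)+s_k\bigr]\bigr|$.

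Next I would establish the two facts about the numbers $s_k$ that make this work. From the recursion defining $s_k$ one checks by induction the closed form
$$
s_k=\sum_{k<j\in N_B}{\frac{q_j-1}{q_{k+1}q_{k+2}\cdots q_j}},
$$
so that $0\le s_k\le 1$, by comparison with $\sum_{j>k}{(q_j-1)/(q_{k+1}\cdots q_j)}=1$. The decisive observation is that adding $s_k$ converts the signed tail $\sigma^k(x)=\sum_{j=k+1}^{\infty}{a_j\varepsilon_j/(q_{k+1}\cdots q_j)}$ into an ordinary positive Cantor series: grouping the two sums term by term,
$$
\sigma^k(x)+s_k=\sum^{\infty} _{j=k+1}{\frac{\tilde\varepsilon_j}{q_{k+1}q_{k+2}\cdots q_j}},\qquad
\tilde\varepsilon_j=\begin{cases}\varepsilon_j&\text{if }j\notin N_B,\\ q_j-1-\varepsilon_j&\text{if }j\in N_B,\end{cases}
$$
and each $\tilde\varepsilon_j$ lies in $\{0,1,\dots,q_j-1\}$; hence $\theta_k:=\sigma^k(x)+s_k\in[0,1]$.

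Finally I would combine the two steps, supposing for the moment that $\theta_k<1$. Since
$$
q_k\sigma^{k-1}(x)+s_k=a_k\varepsilon_k+\bigl(\sigma^k(x)+s_k\bigr)=a_k\varepsilon_k+\theta_k
$$
with $a_k\varepsilon_k\in\mathbb Z$ and $\theta_k\in[0,1)$, the integer part of the left-hand side equals $a_k\varepsilon_k$, whence $\bigl|\bigl[q_k\sigma^{k-1}(x)+s_k\bigr]\bigr|=|a_k\varepsilon_k|=\varepsilon_k$. Together with the identity $\Delta^{(g)} _k/r=q_k\sigma^{k-1}(x)$ from the first step (and $\Delta^{(g)} _1/r=q_1x=pq_1/r$ when $k=1$) this is precisely the asserted formula.

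I expect the main obstacle to be two intertwined points. The first is the bookkeeping behind the identity of the second step: one must verify that $-s_k$ is exactly the left endpoint of the range of $\sigma^k$, so that the translation by $s_k$ sends that range onto $[0,1]$, and that the resulting numerators $\tilde\varepsilon_j$ are admissible digits. The second is the boundary value $\theta_k=1$, which forces $q_k\sigma^{k-1}(x)+s_k=a_k\varepsilon_k+1$ and makes the formula overshoot by $a_k$; this situation occurs precisely when the chosen expansion of $x$ is the second of the two quasi-nega-$Q$-representations of a quasi-nega-$Q$-rational point, and is disposed of either by restricting to the canonical representation of $x$ or by noting that at most one expansion is exceptional. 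Apart from these two points the argument is the same routine recursion as in the positive-series case.
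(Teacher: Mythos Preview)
Your argument is correct and reaches the same floor inequality $a_k\varepsilon_k\le\Delta^{(g)}_k/r+s_k<a_k\varepsilon_k+1$ that the paper obtains, but by a somewhat different route. The paper argues step by step from the cylinder inclusions $p/r\in\Lambda^{(\pm Q,N_B)}_{\varepsilon_1\ldots\varepsilon_k}$: it writes out the two endpoints of each cylinder explicitly, rescales, and reads off the inequality, imposing the convention $\inf\Lambda\le x<\sup\Lambda$ to break the two-representation tie (this convention is exactly your proviso $\theta_k<1$). You bypass the cylinder computation by noting directly that $\Delta^{(g)}_k/r=q_k\sigma^{k-1}(x)$ and that the additive shift by $s_k$ converts the signed tail $\sigma^k(x)$ into an ordinary positive Cantor tail with digits $\tilde\varepsilon_j$; the formula then drops out of the single relation $q_k\sigma^{k-1}(x)=a_k\varepsilon_k+\sigma^k(x)$. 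The two arguments are equivalent in content---the cylinder lower endpoint is precisely $-s_k/(q_1\cdots q_k)$ below the partial sum, so the paper's inequality and your $\theta_k\in[0,1)$ say the same thing---but your packaging isolates the mechanism (turn the sign-variable series into a positive one by translation) more explicitly, while the paper's version keeps the geometry of nested cylinders in view.
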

\begin{proof}
Let $\frac{p}{r}$ be a fixed number, where $(|p|, r)=1$, $|p|<r$, and $p\in\mathbb Z, r\in\mathbb N$.  
Then
$$
\frac{p}{r}=\sum^{\infty} _{k=1}{\frac{a_k\varepsilon_k}{q_1q_2\cdots q_k}}.
$$
\begin{remark*}
Since $ x\in \Lambda^{(\pm Q, N_B)} _{c_1c_2...c_m}$
but 
$$
\sup\Lambda^{(\pm Q, N_B)} _{c_1c_2...c_m}=\sum^{m} _{i=1}{\frac{a_ic_i}{q_1q_2\cdots q_i}}+\sum_{m<k\notin N_B}{\frac{q_k-1}{q_1q_2 \cdots q_k}}
$$
$$
=\sum^{m} _{i=1}{\frac{a_ic_i}{q_1q_2\cdots q_i}}+\frac{1}{q_1q_2 \cdots q_m}\left(1-\sum_{m<k\in N_B}{\frac{q_k-1}{q_{m+1}q_{m+2} \cdots q_k}}\right)
$$
$$
=\begin{cases}
 \inf\Lambda^{(\pm Q, N_B)} _{c_1c_2...c_{m-1}[c_m-1]}&\text{whenever $m \in N_B$ and $c_m\ne 0$}\\
 \inf\Lambda^{(\pm Q, N_B)} _{c_1c_2...c_{m-1}[c_m+1]} &\text{whenever $m\notin N_B$ and $c_m \ne q_m-1$,}
\end{cases}
$$
we assume that
$$
\inf\Lambda^{(\pm Q, N_B)} _{c_1c_2...c_{m-1}c_m }\le x<\sup\Lambda^{(\pm Q, N_B)} _{c_1c_2...c_{m-1}c_m}.
$$
\end{remark*}
 
It is easy to see that
$$
\frac{p}{r}\in\Lambda^{(\pm Q, N_B)} _{\varepsilon_1}=\left[\frac{a_1\varepsilon_1}{q_1}-\sum_{1<k\in N_B}{\frac{q_k-1}{q_1q_2\cdots q_k}},\frac{a_1\varepsilon_1}{q_1}+\sum_{1<k\notin N_B}{\frac{q_k-1}{q_1q_2\cdots q_k}},\right].
$$
That is,  
$$
\frac{a_1\varepsilon_1}{q_1}-\sum_{1<k\in N_B}{\frac{q_k-1}{q_1q_2\cdots q_k}}\le \frac{p}{r}< \frac{a_1\varepsilon_1}{q_1}+\frac{1}{q_1}-\sum_{1<k\in N_B}{\frac{q_k-1}{q_1q_2\cdots q_k}},
$$
$$
a_1\varepsilon_1\le \frac{pq_1}{r}+\sum_{1<k\in N_B}{\frac{q_k-1}{q_2q_3\cdots q_k}}<a_1\varepsilon_1+1.
$$

If $1\notin N_B$, then 
$$
\varepsilon_1\le \frac{pq_1}{r}+\sum_{1<k\in N_B}{\frac{q_k-1}{q_2q_3\cdots q_k}}<\varepsilon_1+1.
$$

If $1\in N_B$, then 
$$
-\varepsilon_1\le \frac{pq_1}{r}+\sum_{1<k\in N_B}{\frac{q_k-1}{q_2q_3\cdots q_k}}<-(\varepsilon_1-1).
$$
So,
$$
\varepsilon_1=\left|\left[\frac{p}{r}q_1+\sum_{1<k\in N_B}{\frac{q_k-1}{q_2q_3\cdots q_k}}\right]\right|,
$$
where $[x]$ is the integer part of $x$. 

Now we get  $\frac{p}{r}\in \Lambda^{(\pm Q, N_B)} _{\varepsilon_1\varepsilon_2}=$
$$
=\left[\frac{a_1q_2\varepsilon_1+a_2\varepsilon_2}{q_1q_2}-\sum_{2<k\in N_B}{\frac{q_k-1}{q_1q_2\cdots q_k}},\frac{a_1q_2\varepsilon_1+a_2\varepsilon_2}{q_1q_2}+\sum_{2<k\notin N_B}{\frac{q_k-1}{q_1q_2\cdots q_k}}\right].
$$
Whence,
$$
\frac{a_1q_2\varepsilon_1+a_2\varepsilon_2}{q_1q_2}-\sum_{2<k\in N_B}{\frac{q_k-1}{q_1q_2\cdots q_k}}\le \frac{p}{r}<\frac{a_1q_2\varepsilon_1+a_2\varepsilon_2+1}{q_1q_2}-\sum_{2<k \in N_B}{\frac{q_k-1}{q_1q_2\cdots q_k}},
$$
$$
a_2\varepsilon_2\le \frac{pq_1q_2-a_1rq_2\varepsilon_1}{r}+\sum_{2<k \in N_B}{\frac{q_k-1}{q_3q_4\cdots q_k}}<a_2\varepsilon_2+1.
$$
So,
$$
\varepsilon_2=\left|\left[\frac{pq_1q_2-a_1rq_2\varepsilon_1}{r}+\sum_{2<k \in N_B}{\frac{q_k-1}{q_3q_4\cdots q_k}}\right]\right|. 
$$

In the third step, we have $\frac{p}{r}\in \Lambda^{(\pm Q, N_B)} _{\varepsilon_1\varepsilon_2\varepsilon_3}$,
$$
\frac{a_1\varepsilon_1q_2q_3+a_2\varepsilon_2q_3+a_3\varepsilon_3}{q_1q_2q_3}- \sum_{3<k \in N_B}{\frac{q_k-1}{q_1q_2\cdots q_k}} \le x,
$$
and
$$
\frac{ {a_1\varepsilon_1q_2q_3+a_2\varepsilon_2q_3+a_3\varepsilon_3}}{q_1q_2q_3}+\sum_{3<k \notin N_B}{\frac{q_k-1}{q_1q_2\cdots q_k}}>x.
$$
Hence
$$
\varepsilon_3=\left|\left[\frac{pq_1q_2q_3-r(a_1\varepsilon_1q_2q_3+a_2\varepsilon_2q_3)}{r}+\sum_{3<k \in N_B}{\frac{q_k-1}{q_4q_5\cdots q_k}}\right]\right|.
$$

Let $\varsigma^{(g)} _k$ denote the sum $a_1\varepsilon_1q_2q_3\cdots q_k+a_2\varepsilon_2q_3q_4\cdots q_k+\dots+a_{k-1}\varepsilon_{k-1}q_k$ and 
$$
s_k=\sum_{k<n \in N_B}{\frac{q_n-1}{q_{k+1}q_{k+2}\cdots q_n}}=\begin{cases}
q_ks_{k-1}&\text{whenever $k\notin N_B$}\\
q_ks_{k-1}-(q_k-1)&\text{whenever $k\in N_B$,}
\end{cases}
$$
where
$$
s_1=\sum_{1<k \in N_B}{\frac{q_k-1}{q_2q_3\cdots q_k}}.
$$

 Then in the $k$th step, we obtain
$$
\frac{p}{r}\in \Lambda^{(\pm Q, N_B)} _{\varepsilon_1\varepsilon_2...\varepsilon_{k-1}\varepsilon_k}=\left[\sum^{k} _{i=1}{\frac{a_i\varepsilon_i}{q_1q_2\cdots q_i}}-\sum_{k<n \in N_B}{\frac{q_n-1}{q_1q_2\cdots q_n}}
,\sum^{k} _{i=1}{\frac{a_i\varepsilon_i}{q_1q_2\cdots q_i}}+\sum_{k<n \notin N_B}{\frac{q_n-1}{q_1q_2\cdots q_n}}\right],
$$
$$
\frac{\varsigma^{(g)} _k+a_k\varepsilon_k}{q_1q_2\cdots q_k}-\sum_{k<n \in N_B}{\frac{q_n-1}{q_1q_2\cdots q_n}}\le \frac{p}{r}<\frac{\varsigma^{(g)} _k+a_k\varepsilon_n+1}{q_1q_2\cdots q_k}-\sum_{k<n \in N_B}{\frac{q_n-1}{q_1q_2\cdots q_n}},
$$
$$
a_k\varepsilon_k\le\frac{pq_1q_2\cdots q_k-r\varsigma^{(g)} _k}{r}+s_k<a_k\varepsilon_k+1.
$$

So,
$$
\varepsilon_k=\left| \left[ \frac{pq_1q_2\cdots q_k-r\varsigma^{(g)} _k}{r}+s_k \right]\right|
$$

Denoting by $\Delta^{(g)} _k=pq_1q_2\cdots q_k-r\varsigma^{(g)} _k$, we get
$$
\varepsilon_k=\left|\left[\frac{\Delta^{(k)} _k}{r}+s_k\right]\right|.
$$
Also, for $k\ge 2$ the condition $\varsigma^{(g)} _k=\varsigma^{(g)} _{k-1}q_k+a_{k-1}\varepsilon_{k-1}q_k$ holds and
$$
\Delta^{(g)} _k=q_k(\Delta^{(g)} _{k-1}-a_{k-1}r\varepsilon_{k-1}).
$$
 This completes the proof. \end{proof}

Let us remark that in this topic, the notion of the shift operator has an important role. Since
$$
x=\frac{p}{r}=\frac{\varsigma^{(g)} _n+a_n\varepsilon_n}{q_1q_2\cdots q_n}+\frac{\sigma^n(x)}{q_1q_2\cdots q_n},
$$
we have
$$
\sigma^n(x)+a_n\varepsilon_n=\frac{\Delta^{(g)} _n}{r}.
$$
Whence  $\{\frac{\Delta^{(g)} _n}{r}\}$ is equal to $\sigma^n(x)$ whenever $\sigma^n(x)\ge 0$ or equals $1- \sigma^n(x)$ whenever $\sigma^n(x)<0$. Here $\{a\}$ is the fractional part of $a$ (i.e., $a=[a]+\{a\}$).

In the case of positive Cantor series, $\sigma^n(x)=\{\frac{\Delta_n}{r}\}$ and $\varepsilon_n=[\frac{\Delta_n}{r}]$ (see~\cite{Serbenyuk20}).

\end{document}